\DeclareMathOperator{\cod}{cod}
\DeclareMathOperator{\diag}{diag}
\DeclareMathOperator{\orb}{O}
\DeclareMathOperator{\tsp}{T}
\DeclareMathOperator{\tr}{trace}
\DeclareMathOperator{\GSYL}{GSYL}
\DeclareMathOperator{\rev}{rev}
\DeclareMathOperator{\POL}{POL}
\DeclareMathOperator{\PEN}{PENCIL}
\DeclareMathOperator{\skewPEN}{SSPENCIL}
\newcommand{\sdotsss}%
{\text{\raisebox{-2.2pt}{$\cdot\,$}%
 \raisebox{1.7pt}{$\cdot$}%
\raisebox{5.6pt}{$\,\cdot$}}}
\renewcommand{\le}{\leqslant}
\renewcommand{\ge}{\geqslant}
\newtheorem{theorem}{Theorem}[section]
\newtheorem{lemma}[theorem]{Lemma}
\newtheorem{definition}[theorem]{Definition}
\newtheorem{remark}[theorem]{Remark}
\newcommand{\hide}[1]{}
\begin{document}

\title{Generic skew-symmetric matrix polynomials with fixed rank and fixed odd grade\tnoteref{t1}}

\tnotetext[t1]{Preprint Report UMINF 17.07, Department of Computing Science, Ume\r{a} University}

\date{}

\author[um]{Andrii Dmytryshyn}
\ead{andrii@cs.umu.se}

\author[cm]{Froil\'an M. Dopico}
\ead{dopico@math.uc3m.es}

\address[um]{Department of Computing Science, Ume\aa \, University, SE-901 87 Ume\aa, Sweden.}

\address[cm]{Departamento de Matem\'aticas, Universidad Carlos III de Madrid, Avenida de la Universidad 30, 28911, Legan\'es, Spain.}

\begin{abstract}
We show that the set of $m \times m$ complex skew-symmetric matrix polynomials of odd grade $d$, i.e., of degree at most $d$, and (normal) rank at most $2r$ is the closure of the single set of matrix polynomials with the certain, explicitly described, complete eigenstructure. This complete eigenstructure corresponds to the most generic $m \times m$ complex skew-symmetric matrix polynomials of odd grade $d$ and rank at most $2r$. In particular, this result includes the case of skew-symmetric matrix pencils ($d=1$).
\end{abstract}

\begin{keyword}
complete eigenstructure\sep genericity\sep matrix polynomials\sep skew-symmetry\sep normal rank\sep orbits\sep pencils

\MSC 15A18\sep 15A21
\end{keyword}

\maketitle

\section{Introduction}

The structure of the sets of matrix pencils and of matrix polynomials with fixed grade and fixed rank is not trivial and, as a consequence, only recently has been investigated in the literature \cite{DeDo08,DeDoLa17,DmDo16}, with the main purpose of providing reasonably simple descriptions of these sets as the closures of certain ``generic sets'' of pencils and matrix polynomials that can be easily described in terms of their eigenstructures or in terms of certain parameterizations. One motivation for this type of research comes from its applications in the study of the effect of low rank perturbations on the spectral information of pencils and matrix polynomials, since these problems have received considerable attention in the last years. See for instance the references  \cite{Batzke14,Batzke15,Batzke16,DeDo07,DeDo16,DeDoMo08,MMW17} for different low rank perturbation problems related to matrix pencils, and the particular study in \cite{DeDo09} on certain low rank perturbations of matrix polynomials. Moreover, low rank perturbations of matrix pencils have been applied recently to some classical problems as the eigenvalue placement problem \cite{GerTrunk17} or the estimation of the distance of a regular pencil to the nearest singular pencil \cite{MMW15}. Finally, from a more theoretical perspective, the study of the sets of matrix pencils and of matrix polynomials with fixed grade and fixed rank generalizes  classical studies \cite{Wate84} on the algebraic structure of the set of $n\times n$ singular pencils, i.e., those whose rank is at most $n-1$.

As can be observed in the titles of several of the references mentioned in the paragraph above, some of the problems concerning low rank perturbations of matrix pencils involve pencils with particular structures (symmetric, Hermitian, palindromic, skew-symmetric, alternating, etc), since the pencils and matrix polynomials arising in applications have often such particular structures \cite{MMMM06b}. In this scenario, it is natural to consider the extension of the results in \cite{DeDo08,DeDoLa17,DmDo16} to the structured setting, with the aim of providing simple descriptions of sets of structured matrix pencils and of structured matrix polynomials with fixed grade and fixed rank in the case of the structures appearing in applications. The results in this paper can be seen as the first step available in the literature towards the solution of this ambitious and wide problem. In particular, we prove that the set of $m \times m$ complex skew-symmetric matrix polynomials of odd grade $d$, i.e., of degree at most $d$, and (normal) rank at most $2r$ is the closure of the single set of matrix polynomials with the certain, explicitly described, complete eigenstructure, which is termed as the {\em unique generic eigenstructure} of this set of skew-symmetric matrix polynomials of odd grade $d$ and rank at most $2r$. By taking $d=1$, we obtain that such uniqueness also holds for skew-symmetric pencils. This result greatly simplifies the description of the set of skew-symmetric polynomials of fixed odd grade and fixed rank and illustrates how strong can be the effect of imposing  a structure on this type of problems, since the corresponding unstructured problem  has many more generic complete eigenstructures. More precisely, it has been proved in  \cite{DeDo08,DmDo16} that there are $rd+1$ generic complete eigenstructures in the set of $m \times m$ complex {\em general (unstructured)} matrix polynomials of odd grade $d$ and rank at most $r$.

There are two reasons for restricting our attention to skew-symmetric matrix polynomials of odd grade in this paper. The first one is that the general (that is, without imposing any rank constraint) sets of skew-symmetric matrix pencils and of skew-symmetric matrix polynomials of fixed odd grade $d$ have been deeply studied in the literature and their stratification hierarchies are well understood \cite{Dmyt15,DmKa14}, while similar results for other classes of structured pencils and matrix polynomials are not yet known. The second reason is that skew-symmetric pencils and matrix polynomials appear in a number of interesting applications, e.g., analysis of passive velocity field controllers \cite{LiHo99}, bi-Hamiltonian systems \cite{Olve91}, multisymplectic PDEs \cite{BrRe01}, investigation of a product of two skew-symmetric matrices and particular symmetric factorizations of skew-symmetric rational matrices \cite{MMMM13}.


The paper is organized as follows. Section \ref{pencils} includes some basic results on general and skew-symmetric pencils, which allow us to describe in Section \ref{sec.main1} the set of skew-symmetric matrix pencils with rank at most $2 w$ as the closure of the matrix pencils that have a single and simple skew-symmetric Kronecker canonical form. Section \ref{sect.prempolys} presents a number of preliminary results on general and skew-symmetric matrix polynomials that combined with the results for pencils of Section \ref{sec.main1} allow us to prove our main result for skew-symmetric matrix polynomials of fixed odd grade and fixed rank, i.e., Theorem \ref{mainth} in Section \ref{sec.main}. Finally, the conclusions, some lines of future research, and some difficulties for extending the results of this paper are discussed in Section \ref{sect.conclusions}.

The reader should bear in mind throughout this paper that all the matrices that we consider have complex entries, and that the rank of any pencil or any matrix polynomial is defined as the largest size of the minors that are not identically zero scalar polynomials \cite{Gant59}, which is sometimes called the normal rank \cite{EdEK97}. However, for brevity, we do not use the word ``normal'' in this paper.

\section{Preliminaries on skew-symmetric matrix pencils}
\label{pencils}

We start by recalling the Kronecker canonical form (KCF) of general matrix pencils and the canonical form of skew-symmetric matrix pencils under congruence. Define
$\overline{\mathbb C} := \mathbb C \cup  \infty$.
For each $k=1,2, \ldots $, define the $k\times k$
matrices
\begin{equation*}\label{1aa}
J_k(\mu):=\begin{bmatrix}
\mu&1&&\\
&\mu&\ddots&\\
&&\ddots&1\\
&&&\mu
\end{bmatrix},\qquad
I_k:=\begin{bmatrix}
1&&&\\
&1&&\\
&&\ddots&\\
&&&1
\end{bmatrix},
\end{equation*}
and for each $k=0,1, \ldots $, define the $k\times
(k+1)$ matrices
\begin{equation*}
F_k :=
\begin{bmatrix}
0&1&&\\
&\ddots&\ddots&\\
&&0&1\\
\end{bmatrix}, \qquad
G_k :=
\begin{bmatrix}
1&0&&\\
&\ddots&\ddots&\\
&&1&0\\
\end{bmatrix}.
\end{equation*}
All non-specified entries of $J_k(\mu), I_k, F_k,$ and $G_k$ are zeros.

An $m \times n$ matrix pencil $\lambda A - B$ is called {\em strictly equivalent} to $\lambda C - D$ if and only if there are non-singular matrices $Q$ and $R$ such that $Q^{-1}AR =C$ and $Q^{-1}BR=D$.

\begin{theorem}{\rm \cite[Ch. XII, Sect. 4]{Gant59}}\label{kron}
Each $m \times n$ matrix pencil $\lambda A - B$ is strictly equivalent
to a direct sum, uniquely determined up
to permutation of summands, of pencils of the form
\begin{align*}
{\cal E}_k(\mu)&:=\lambda I_k - J_k(\mu), \text { in which } \mu \in \mathbb C, \quad {\cal E}_k(\infty):=\lambda J_k(0) - I_k, \\
{\cal L}_k&:=\lambda G_k - F_k, \quad \text{ and } \quad {\cal L}_k^T:=\lambda G_k^T - F^T_k.
\end{align*}
This direct sum is called the KCF of $\lambda A -B$.
\end{theorem}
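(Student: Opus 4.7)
The plan is to prove this classical theorem by separating the singular and regular parts of $\lambda A - B$. First I would compute $r = \rank(\lambda A - B)$ and consider the left and right null spaces of $\lambda A - B$ over the field of rational functions $\mathbb{C}(\lambda)$, which have dimensions $m-r$ and $n-r$ respectively. If both are trivial the pencil is regular and I pass directly to the Weierstrass step; otherwise the singular blocks must be peeled off one at a time before treating the regular core.

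For the singular deflation, suppose the right null space is nontrivial and let $x(\lambda) = x_0 + \lambda x_1 + \cdots + \lambda^k x_k$ be a polynomial right null vector of minimal degree $k$. Expanding $(\lambda A - B)x(\lambda)=0$ gives the chain of relations $B x_0 = 0$, $A x_j = B x_{j+1}$ for $0 \le j \le k-1$, and $A x_k = 0$. The technical heart of the argument is to show that the vectors $x_0,\dots,x_k \in \mathbb{C}^n$ and $Ax_0,\dots,Ax_{k-1} \in \mathbb{C}^m$ are linearly independent; any dependence would produce a right null vector of lower degree, contradicting minimality. Completing these to bases of $\mathbb{C}^n$ and $\mathbb{C}^m$ yields nonsingular $Q,R$ with
\[
Q^{-1}(\lambda A - B) R \;=\; \mathcal{L}_k \;\oplus\; (\lambda A' - B'),
\]
where $\lambda A' - B'$ is smaller and its right minimal indices are those of $\lambda A - B$ minus $k$. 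Iterating peels off every $\mathcal{L}_{k_i}$; transposing the pencil and repeating the same construction peels off every $\mathcal{L}_{\ell_j}^T$.

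What remains is a square regular pencil. Here I would invoke (or prove independently) the Weierstrass canonical form via the Smith normal form of $\lambda A - B$ over $\mathbb{C}[\lambda]$: each invariant factor splits into elementary divisors $(\lambda-\mu)^k$, and each such factor contributes a block $\mathcal{E}_k(\mu)$ via a rational canonical form argument together with Jordan decomposition of the associated companion matrix. The elementary divisors at infinity are extracted by applying the same Smith form analysis to the reversed pencil $A - \lambda B$, and contribute the $\mathcal{E}_k(\infty)$ blocks. Uniqueness up to permutation follows because every piece of the extracted data is a strict-equivalence invariant: minimal indices are the degree sequence of a minimal polynomial basis of the null space (a result of Forney), and elementary divisors are read off the Smith normal forms of $\lambda A - B$ and its reversal.

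The main obstacle is the deflation lemma above: one must verify the linear-independence claim and, more delicately, choose the completing basis vectors so that the complementary pencil $\lambda A' - B'$ contains no ``leakage'' coupling it back to the columns of the extracted $\mathcal{L}_k$. Carrying this out cleanly is the combinatorial core of Kronecker's original argument and constitutes the bulk of the treatment in Gantmacher's book; the regular case is comparatively routine once the singular part has been removed.
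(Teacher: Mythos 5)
The paper does not prove this theorem; it cites it as a classical result from Gantmacher \cite[Ch.\ XII, Sect.\ 4]{Gant59}, and your sketch is essentially a faithful summary of Gantmacher's own argument there (singular deflation via a minimal-degree polynomial null vector, iteration, transposition for the left blocks, then Smith/Weierstrass for the regular core, with uniqueness via strict-equivalence invariants). Your outline is correct, and you have correctly identified the decoupling lemma --- that the extracted $\mathcal{L}_k$ can be split off with no residual coupling to the complementary pencil --- as the technical heart of the classical proof.
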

\noindent  The regular part of $\lambda A - B$ consists of the blocks ${\cal E}_k(\mu)$ and ${\cal E}_k(\infty)$ corresponding to the finite and infinite eigenvalues, respectively. The singular part of $\lambda A - B$ consists of the blocks ${\cal L}_k$ and ${\cal L}_k^T$ corresponding to the column and row minimal indices, respectively. The number of blocks ${\cal L}_k$ (respectively, ${\cal L}_k^T$) in the KCF of $\lambda A - B$ is equal to the dimension of the right (respectively, left) rational null-space of $\lambda A - B$.

Define an {\it orbit} of $\lambda A - B$ under the
action of the group $GL_m(\mathbb C) \times GL_n(\mathbb C)$ on the space of all matrix pencils by strict equivalence as follows:
\begin{equation} \label{equorbit}
\orb^e (\lambda A - B) = \{Q^{-1} (\lambda A - B) R \ : \ Q \in GL_m(\mathbb C), R \in GL_n(\mathbb C)\}.
\end{equation}
The orbit of $\lambda A - B$ is
a manifold in the complex $2mn$ dimensional space.

The space of all $m\times n$ matrix pencils is denoted by $\PEN_{m \times n}$. A distance in $\PEN_{m \times n}$ can be defined with the Frobenius norm of complex matrices \cite{Highambook} as $d(\lambda A - B, \lambda C - D) := \sqrt{\|A-C\|_F^2 + \|B-D\|_F^2}$, which makes $\PEN_{m \times n}$ into a metric space. This metric allows us to consider closures of subsets of $\PEN_{m \times n}$, in particular, closures of orbits by strict equivalence, denoted by $\overline{\orb^e}(\lambda A - B)$. By using these concepts, the following result from \cite{Bole98},  see also \cite{EdEK99}, describes all the possible changes in the KCF of a matrix pencil under arbitrarily small perturbations.
If in the KCF of a matrix pencil the blocks ${\cal X}$ are changed to the blocks ${\cal Y}$ (${\cal X}$ and ${\cal Y}$ of the same size) we write ${\cal X} \rightsquigarrow {\cal Y}$.
\begin{theorem}{\rm \cite{Bole98}}  \label{erelations}
Let ${\cal P}_1$ and ${\cal P}_2$ be two matrix pencils in KCF. Then,  $\overline{\orb^e}({\cal P}_1) \supset \orb^e ({\cal P}_2)$
if and only if ${\cal P}_1$ can be obtained from ${\cal P}_2$ changing canonical blocks of ${\cal P}_2$ by applying a sequence of rules and each rule is one of the six types below:
\begin{enumerate}
\item ${\cal L}_{j-1} \oplus {\cal L}_{k+1} \rightsquigarrow {\cal L}_j \oplus {\cal L}_k$, $1\le j \le k;$
\item ${\cal L}_{j-1}^T \oplus {\cal L}_{k+1}^T \rightsquigarrow {\cal L}_j^T \oplus {\cal L}_k^T$, $1\le j \le k;$
\item ${\cal L}_{j} \oplus {\cal E}_{k+1}(\mu) \rightsquigarrow {\cal L}_{j+1} \oplus {\cal E}_k(\mu)$, $j,k=0,1,2, \dots$ and $\mu \in \overline{\mathbb C};$
\item ${\cal L}_{j}^T \oplus {\cal E}_{k+1}(\mu) \rightsquigarrow {\cal L}_{j+1}^T \oplus {\cal E}_k(\mu)$, $j,k=0,1,2, \dots$ and $\mu \in \overline{\mathbb C};$
\item ${\cal E}_{j}(\mu) \oplus {\cal E}_{k}(\mu) \rightsquigarrow {\cal E}_{j-1}(\mu) \oplus {\cal E}_{k+1}(\mu)$, $1\le j \le k$ and $\mu \in \overline{\mathbb C};$
\item ${\cal L}_{p} \oplus {\cal L}_{q}^T \rightsquigarrow \bigoplus_{i=1}^t{\cal E}_{k_i}(\mu_i)$, if $p+q+1= \sum_{i=1}^t k_i$ and $\mu_i \neq \mu_{i'}$ for $i \neq i', \mu_i \in \overline{\mathbb C}.$
\end{enumerate}
Observe that in the rules above any block ${\cal E}_0 (\mu)$ should be understood as the empty matrix.
\end{theorem}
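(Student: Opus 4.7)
The proof naturally splits into two parts: a \emph{constructive} direction, verifying that each of the six rules does produce a valid closure relation, and a \emph{completeness} direction, showing that any closure relation between two KCFs factors as a finite composition of these rules. The plan is to handle them separately.

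For the constructive direction, the plan is to exhibit, for each of the six rules, an explicit one-parameter family of pencils whose KCF for a nonzero parameter coincides with the left-hand side and whose limit as the parameter tends to zero is strictly equivalent to the right-hand side. Rules~(1) and~(2) reduce to a direct perturbation that redistributes one unit of size between two adjacent ${\cal L}$ (respectively ${\cal L}^T$) blocks. Rule~(5) is the classical fact that two Jordan blocks at the same eigenvalue merge under a suitable rank-one perturbation. Rules~(3) and~(4) combine a singular block with a regular block at $\mu \in \overline{\mathbb C}$ by an analogous argument. Rule~(6), the most delicate, amounts to perturbing the corner entries of ${\cal L}_p \oplus {\cal L}_q^T$ so that the resulting square $(p+q+1)\times(p+q+1)$ pencil becomes regular with the prescribed spectrum $\mu_1,\ldots,\mu_t$ and multiplicities $k_1,\ldots,k_t$.

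The completeness direction, which is the main obstacle, relies on a codimension argument. Each KCF determines a tuple of invariants (sizes of singular blocks and Weyr characteristics at each eigenvalue in $\overline{\mathbb C}$), and $\cod(\orb^e(\cdot))$ in $\PEN_{m \times n}$ has a closed-form expression in these invariants. The key observation is that $\cod$ is monotone under closure inclusion and that each of the six rules raises it by exactly one unit, so a general closure relation between two orbits can be factored into a chain of \emph{atomic} closures of unit codimension increment. It then remains to classify all atomic closures, which is carried out by combining the semicontinuity of the dimensions of the left and right rational null-spaces (which count the ${\cal L}^T$ and ${\cal L}$ blocks), the semicontinuity of the partial multiplicities of each eigenvalue, and the conservation of total size. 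A case analysis then shows that the only atomic moves compatible with these constraints are the six rules listed.

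The hardest part of the completeness step is to isolate rule~(6) as the unique atomic mechanism that can convert a pair of singular blocks into regular ones: all the remaining rules preserve the numbers of singular blocks of each type, so the analysis hinges on the observation that whenever a ${\cal L}_p$ block and a ${\cal L}_q^T$ block simultaneously disappear, their combined size $p+q+1$ must reappear as the total size of a collection of ${\cal E}_{k_i}(\mu_i)$ blocks at pairwise distinct spectral points. Ruling out intermediate configurations that would provide alternate factorisations of a given closure chain is the main technical task.
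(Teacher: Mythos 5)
The paper does not prove Theorem \ref{erelations}; it is quoted from \cite{Bole98} (see also \cite{EdEK99}), so there is no in-paper argument to compare against. On its own merits, the constructive half of your plan is the standard one and is sound: each of the six rules is realized by an explicit one-parameter, block-by-block deformation.

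The completeness half, however, is built on a false premise. You assert that each of the six rules raises $\cod\orb^e$ by exactly one unit and then propose to decompose a general closure inclusion into a chain of unit-codimension atomic steps, finishing by a classification of such steps. But the codimension jump of a single rule application is not $1$ in general. Concretely, rule (5) with $j=k=1$ sends ${\cal E}_1(\mu)\oplus{\cal E}_1(\mu)$, whose strict-equivalence orbit in $\PEN_{2\times 2}$ has codimension $4$, to ${\cal E}_2(\mu)$, whose orbit has codimension $2$, a jump of $2$. Similarly rule (1) with $(j,k)=(1,2)$ sends ${\cal L}_0\oplus{\cal L}_3$, of codimension $2$ in $\PEN_{3\times 5}$, directly to the dense orbit of ${\cal L}_1\oplus{\cal L}_2$, which has codimension $0$. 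The six rules in the theorem form a convenient \emph{generating set} for the closure preorder, not a list of its covering relations (the covers, tabulated in \cite{EdEK99}, carry extra side conditions and are still not all of codimension one). Consequently, ``factor every closure inclusion into codimension-one atomic moves and classify the atoms'' cannot produce these six rules.

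The necessity argument in \cite{Bole98} and its sources runs differently. Orbit-closure inclusion for Kronecker structures is equivalent to a finite family of dominance-order (majorization) inequalities between conjugate partitions built from the column-minimal-index sequence, the row-minimal-index sequence, and the Weyr characteristic at each $\mu\in\overline{\mathbb C}$ (the Pokrzywa/den Boer--Thijsse conditions, obtained from lower semicontinuity of the relevant ranks). Completeness is then a purely combinatorial fact: these inequalities hold between two KCFs if and only if one is reachable from the other by some sequence of the six rules. Your codimension chain would need to be replaced by this majorization step. Finally, your closing remark about ``ruling out alternate factorisations'' points the wrong way: the theorem only requires the existence of \emph{some} rule sequence, so uniqueness of the decomposition plays no role.
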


An $n \times n$ matrix pencil $\lambda A - B$ is called {\it congruent} to $\lambda C - D$ if there is a non-singular matrix $S$ such that $S^{T}AS =C$ and $S^{T}BS=D$.
\hide{
The {\it dimension} of $\orb_{\lambda A - B}^e$
is the dimension of its tangent space
\begin{equation*}\label{taneqsp}
\tsp^e (\lambda A + B):=\{\lambda (XA-AY) + (XB - BY):
X\in{\mathbb
C}^{m\times m}, Y\in{\mathbb
C}^{n\times n}\}
\end{equation*}
at the point $\lambda A - B$, $\dim \tsp^e (\lambda A - B)$.
The orthogonal complement to $\tsp^e(\lambda A - B)$, with respect to the Frobenius inner product
\begin{equation}\label{innerprod}
\langle \lambda A - B,\lambda C + D \rangle=\tr (AC^*+BD^*),
\end{equation}
is called the normal space to this orbit. The dimension of the normal space is the {\it codimension} of $\orb^e(\lambda A - B)$, denoted $\cod \orb^e(\lambda A - B)$, and is equal to $2mn$ minus the dimension of $\orb^e(\lambda A - B)$.
Explicit expressions for the codimensions of strict equivalence orbits are presented in \cite{DeEd95}.
}
In the following theorem we recall the canonical form under congruence of skew-symmetric matrix pencils, i.e., those satisfying $(\lambda A - B)^T = -(\lambda A - B)$.
\begin{theorem}{\rm \cite{Thom91} }\label{lkh}
Each skew-symmetric $n \times n$ matrix pencil $\lambda A - B$ is congruent
to a direct sum,
determined uniquely up
to permutation of
summands, of pencils of
the form
\begin{align*}
{\cal H}_{h}(\mu)&:=
\lambda
\begin{bmatrix}0&I_h\\
-I_h &0
\end{bmatrix} -
\begin{bmatrix}0&J_h(\mu)\\
-J_h(\mu)^T &0
\end{bmatrix}
,\quad \mu \in\mathbb C,\\
{\cal K}_k&:=
\lambda
\begin{bmatrix}0&J_k(0)\\
-J_k(0)^T &0
\end{bmatrix} -
\begin{bmatrix}0&I_k\\
-I_k&0
\end{bmatrix},\\
{\cal M}_m&:=
\lambda
\begin{bmatrix}0&G_m\\
-G_m^T&0
\end{bmatrix} -
\begin{bmatrix}0&F_m\\
-F_m^T &0
\end{bmatrix}.
\end{align*}
This direct sum is called the skew-symmetric KCF of $\lambda A-B$. Observe that the block ${\cal M}_0$ is just the $1 \times 1$ zero matrix.
\end{theorem}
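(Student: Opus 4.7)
The plan is to prove existence and uniqueness separately, using the KCF from Theorem \ref{kron} as the principal tool. For uniqueness, observe that congruence is a special case of strict equivalence, so the KCF of a skew-symmetric pencil is a congruence invariant. A direct block computation shows that the KCFs of the three canonical blocks are
\[
 {\cal H}_h(\mu) \sim_e {\cal E}_h(\mu) \oplus {\cal E}_h(\mu),\qquad {\cal K}_k \sim_e {\cal E}_k(\infty) \oplus {\cal E}_k(\infty),\qquad {\cal M}_m \sim_e {\cal L}_m \oplus {\cal L}_m^T.
\]
Since these KCFs are disjoint in the sense that no two of them share a canonical summand, the multiplicities of ${\cal H}_h(\mu)$, ${\cal K}_k$, and ${\cal M}_m$ in any skew-symmetric decomposition of $\lambda A - B$ are pinned down by its (unique) KCF. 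Uniqueness follows.

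For existence I would argue by induction on $n$. The base case $n=0$ is vacuous, giving the empty direct sum (interpreted as a single ${\cal M}_0$ block if $n=1$ and the pencil is zero). For the inductive step, split into a singular and a regular case. In the singular case, pick a nonzero polynomial vector $v(\lambda) = v_0 + \lambda v_1 + \cdots + \lambda^m v_m \in \mathbb{C}[\lambda]^n$ of minimal degree $m$ in the right null-space; this yields the chain relations $B v_0 = 0$, $A v_i = B v_{i+1}$ for $0 \le i \le m-1$, and $A v_m = 0$. Skew-symmetry implies $v(\lambda)$ is also a left null-vector (of $(\lambda A-B)^T$), so the left and right chains can be identified. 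I would then construct dual vectors $w_0,\dots,w_{m-1}$ so that the $(2m+1)$-dimensional subspace spanned by $v_0,\dots,v_m,w_0,\dots,w_{m-1}$ carries the pencil ${\cal M}_m$ in that basis.

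In the regular case, the pencil has a finite or infinite eigenvalue. Taking a Jordan chain $v_0,\dots,v_{h-1}$ at that eigenvalue and building a skew-dual chain $w_0,\dots,w_{h-1}$ using the skew-symmetric bilinear forms $A$ and $B$ produces a $2h$-dimensional subspace on which the pencil is congruent to ${\cal H}_h(\mu)$ (or to ${\cal K}_k$ at infinity). In both the singular and regular cases, the constructed subspace is nondegenerate for the pair $(A,B)$ of skew-symmetric forms, so its $(A,B)$-orthogonal complement has the complementary dimension and carries a skew-symmetric pencil to which the induction hypothesis applies.

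The main obstacle is the singular case: building the dual vectors $w_0,\dots,w_{m-1}$ and proving that the resulting $(2m+1)$-dimensional subspace is nondegenerate with respect to both $A$ and $B$ simultaneously. The minimality of $m$ must be used to rule out degeneracies (e.g.\ $w_i$'s accidentally lying in the null-space of one of the forms), and one must check that the complement inherits skew-symmetry and has the expected dimension so that the induction closes. The regular case is similar in spirit but simpler, since generalized eigenspaces for $A^{-1}B$ (or $B^{-1}A$ at $\infty$) are classical and the pairing argument is standard.
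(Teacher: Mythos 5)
The paper does not prove this theorem: it is cited directly from Thompson \cite{Thom91}, so there is no ``paper's own proof'' to compare against. Judged on its own merits, your proposal is the right shape of argument.

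Your uniqueness argument is clean and essentially complete. Congruence implies strict equivalence, so the KCF of Theorem \ref{kron} is a congruence invariant, and since each of the three block types contributes a KCF with disjoint support (two copies of ${\cal E}_h(\mu)$, two copies of ${\cal E}_k(\infty)$, or one ${\cal L}_m$ plus one ${\cal L}_m^T$), the multiplicity of each skew-symmetric block in any decomposition is read off uniquely from the KCF. This is a slicker uniqueness proof than arguing directly with invariants of the congruence action, and it is the same reduction the paper implicitly leans on elsewhere via \cite[Lem.~3.8]{DmKa14}.

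The existence part, however, is a strategy outline rather than a proof, and the gap you flag as ``the main obstacle'' is precisely where all the mathematical content of Thompson's theorem lives. Two specific points deserve attention. First, the nondegeneracy you need cannot be that $A|_V$ and $B|_V$ are individually nondegenerate: for an ${\cal M}_m$-block the restrictions of both $A$ and $B$ to the $(2m+1)$-dimensional carrier space have rank $2m$ (a skew form on an odd-dimensional space is always singular), so the correct condition is that $V\oplus(V^{\perp_A}\cap V^{\perp_B})=\mathbb{C}^n$, equivalently that $V^{\perp_A}\cap V^{\perp_B}$ is a true complement to $V$, and establishing this from minimality of the degree $m$ is the real work. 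Second, in the regular case you cannot simply ``take a Jordan chain and build a dual chain'': one must first split off the infinite part (the $A$-nilpotent part) from the finite part, then deal with the finite eigenvalues via $A^{-1}B$, and the pairing that produces ${\cal H}_h(\mu)$ is a nondegenerate pairing between a Jordan chain and a chain from the adjoint action; for skew-symmetric $A,B$ this pairing cannot be self-paired on a single chain (that would force a symmetric rather than skew-symmetric block), so the two copies of ${\cal E}_h(\mu)$ must come from two genuinely distinct chains. Neither of these steps is carried out, and a minor confusion in the base case ($n=0$ versus $n=1$; a $1\times1$ skew-symmetric pencil is automatically zero, hence a single ${\cal M}_0$) also needs tidying. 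So: correct uniqueness, correct outline for existence, but the existence proof remains to be written.
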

\noindent Similarly to KCF,  the regular part of $\lambda A - B$ consists of the blocks ${\cal H}_{h}(\mu)$ and ${\cal K}_k$ corresponding to the finite and infinite eigenvalues, respectively. The singular part of $\lambda A - B$ consists of the blocks ${\cal M}_m$ corresponding to the right (column) and left (row) minimal indices, which are equal in the case of skew-symmetric matrix pencils.
Define an {\it orbit} of $\lambda A - B$ under the
action of the group $GL_n(\mathbb C)$ on the space of all skew-symmetric matrix pencils by congruence as follows:
\begin{equation} \label{equorbit}
\orb^c (\lambda A - B) = \{S^{T} (\lambda A - B) S \ : \ S \in GL_n(\mathbb C)\}.
\end{equation}
The orbit of $\lambda A - B$ is a manifold in the complex $n^2-n$ dimensional space of skew-symmetric pencils.

The fact that two skew-symmetric matrix pencils are congruent if and only if they have the same skew-symmetric KCF follows immediately from Theorem \ref{lkh}, as well as the well-known property that skew-symmetric matrix pencils have always even rank.

\section{Generic skew-symmetric matrix pencils with bounded rank} \label{sec.main1}

In this section we state and prove our main results about skew-symmetric matrix pencils, in particular, we will find the most generic congruence orbit of skew-symmetric matrix pencils of a fixed rank in Theorem \ref{th:improvedDeDo}, show its irreducibility in Lemma \ref{zariskilemma}, and count its codimension in \eqref{codimcomp}. These results will be used in Section \ref{sec.main} for obtaining similar results about skew-symmetric matrix polynomials. In order to keep the arguments concise, we often use expressions as ``the pencil ${\cal P}_1$ is more generic than the pencil ${\cal P}_2$'', whose precise meaning is that $\overline{\orb^e} ({\cal P}_1) \supset \orb^e ({\cal P}_2)$ or $\overline{\orb^c} ({\cal P}_1) \supset \orb^c ({\cal P}_2)$, depending of the context. In this language, the most generic skew-symmetric pencil with rank $2w$ is a pencil such that the closure of its congruence orbit includes the congruence orbit of any other skew-symmetric pencil with different KCF and with rank at most $2w$. We denote the vector space of skew-symmetric matrix pencils of size $n\times n$ by $\PEN_{n\times n}^{ss}$. A distance in $\PEN_{n\times n}^{ss}$ can be defined as $d(\lambda A - B, \lambda C - D) := \sqrt{\|A-C\|_F^2 + \|B-D\|_F^2}$, as in the case of general pencils. With this distance at hand, we can consider closures of subsets in $\PEN_{n\times n}^{ss}$, as well as any other topological concept.

\begin{theorem} \label{th:improvedDeDo}
Let $n$ and $w$ be integers such that $n \geq 2$ and $2\leq 2w \leq n-1$.
The set of $n \times n$ complex skew-symmetric matrix pencils with rank at most $2w$ is a closed subset of $\PEN^{ss}_{n \times n}$ equal to $\overline{\orb^c}({\cal W})$, where
\begin{equation}\label{max}
{\cal W} =\diag(\underbrace{{\cal M}_{\alpha+1},\hdots,{\cal M}_{\alpha+1}}_{s},
\underbrace{{\cal M}_{\alpha},\hdots,{\cal M}_{\alpha}}_{n-2w-s})\,
\end{equation}
with $\alpha= \lfloor w/(n-2w) \rfloor$ and
$s= w \, \mathrm{mod}\, (n-2w)$.
\hide{
Let us define, in the set of $n\times n$ complex skew-symmetric matrix pencils with
rank $2r_1$, the following skew-symmetric canonical form:
\begin{equation}\label{max}
{\cal W} (\lambda)=\diag(\underbrace{M_{\alpha+1},\hdots,M_{\alpha+1}}_{s},
\underbrace{M_{\alpha},\hdots,M_{\alpha}}_{n-2r_1-s})\,
\end{equation}
where $\alpha= \lfloor r_1/(n-2r_1) \rfloor$ and
$s= r_1 \, \mathrm{mod}\, (n-2r_1)$.
Then,
\begin{enumerate}
\item[\rm (i)] $\overline{\orb^c}({\cal M}) \supseteq\overline{\orb^c}({\cal X})$ for every $n_1\times n_1$ skew-symmetric matrix pencil ${\cal X}(\lambda)$ with rank at most $2r_1$.
\item[\rm (ii)] The set of $n_1 \times n_1$ complex matrix pencils with rank
at most $2r_1$ is a closed  subset of $\skewPEN_{n_1 \times n_1}$ equal to $\overline{\orb^e}({\cal M} )$.
\end{enumerate}
}
\end{theorem}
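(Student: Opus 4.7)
The plan is to establish the two inclusions between $\overline{\orb^c}(\mathcal{W})$ and the set $S:=\{\mathcal{P}\in\PEN_{n\times n}^{ss}:\rank\mathcal{P}\le 2w\}$. As a preliminary check, the definitions give $w=\alpha(n-2w)+s$, so $\mathcal{W}$ has size $s(2\alpha+3)+(n-2w-s)(2\alpha+1)=n$ and rank $2s(\alpha+1)+2(n-2w-s)\alpha=2w$. Because the condition $\rank\le 2w$ is the vanishing of all $(2w+1)$-minors of $\lambda A-B$, $S$ is closed in $\PEN_{n\times n}^{ss}$, and $\mathcal{W}\in S$ immediately yields $\overline{\orb^c}(\mathcal{W})\subseteq S$.

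For the non-trivial inclusion $S\subseteq\overline{\orb^c}(\mathcal{W})$, I would take any $\mathcal{P}\in S$ of rank $2w'\le 2w$ and show $\orb^c(\mathcal{P})\subseteq\overline{\orb^c}(\mathcal{W})$. By Theorem~\ref{lkh}, $\mathcal{P}$ has a skew-symmetric KCF formed by blocks $\mathcal{H}_h(\mu)$, $\mathcal{K}_k$, and $\mathcal{M}_m$. The key idea is to exhibit a sequence of structured moves taking the KCF of $\mathcal{P}$ to that of $\mathcal{W}$, each move corresponding to an inclusion $\orb^c(\text{before})\subseteq\overline{\orb^c}(\text{after})$. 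These structured moves are obtained from Theorem~\ref{erelations} by applying its rules simultaneously to the paired $\mathcal{L}_m$ and $\mathcal{L}_m^T$ summands of each $\mathcal{M}_m$: (a) $\mathcal{M}_{j-1}\oplus\mathcal{M}_{k+1}\rightsquigarrow\mathcal{M}_j\oplus\mathcal{M}_k$ for $1\le j\le k$, rebalancing minimal indices while preserving rank; (b) $\mathcal{M}_j\oplus\mathcal{H}_{k+1}(\mu)\rightsquigarrow\mathcal{M}_{j+1}\oplus\mathcal{H}_k(\mu)$ together with its $\mathcal{K}$-variant, absorbing a unit of a regular block into a singular one while preserving rank; and (c) $\mathcal{M}_p\oplus\mathcal{M}_q\rightsquigarrow\bigoplus_i\mathcal{H}_{h_i}(\mu_i)\oplus\bigoplus_j\mathcal{K}_{k_j}$ with $\sum h_i+\sum k_j=p+q+1$ and pairwise distinct $\mu_i$, converting two singular blocks into regular ones and increasing the rank by 2.

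With these tools, the deformation $\mathcal{P}\to\mathcal{W}$ proceeds in three stages. First, move (b) repeatedly absorbs each regular block of $\mathcal{P}$ into a neighbouring $\mathcal{M}$, leaving a KCF made only of $\mathcal{M}$'s with total rank $2w'$ and count $n-2w'$. Second, if $w'<w$, I repeat the pattern ``apply (c) to two $\mathcal{M}$'s to create a regular block, then use (b) to reabsorb that block into a third $\mathcal{M}$'': each round replaces three $\mathcal{M}$'s by one, increases rank by 2 and decreases the number of $\mathcal{M}$'s by 2; after $w-w'$ such rounds the pencil consists of exactly $n-2w$ $\mathcal{M}$'s of total rank $2w$. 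The assumption $2w\le n-1$ guarantees that at least three $\mathcal{M}$'s are available before the final round. Third, move (a) rebalances the multiset of $\mathcal{M}$-indices into the balanced partition with $s$ parts equal to $\alpha+1$ and $n-2w-s$ parts equal to $\alpha$, arriving at $\mathcal{W}$. All intermediate KCFs are skew-symmetric with rank at most $2w$, so the chain stays inside $\PEN_{n\times n}^{ss}$ and establishes $\orb^c(\mathcal{P})\subseteq\overline{\orb^c}(\mathcal{W})$.

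The main obstacle is twofold. First, each move (a)--(c) must be rigorously justified as a \emph{congruence}-orbit degeneration rather than a mere strict-equivalence one; here the key facts are Theorem~\ref{lkh}, by which two skew-symmetric pencils are strictly equivalent iff they are congruent (so any deformation realised within $\PEN_{n\times n}^{ss}$ automatically corresponds to a congruence deformation), combined with the structured covering/stratification results of \cite{Dmyt15,DmKa14}. Second, one must verify combinatorial completeness of the three-stage reduction: the balanced choice of minimal indices in $\mathcal{W}$ is precisely the one to which no instance of rule (a) can be applied in reverse, confirming that $\mathcal{W}$ sits at the top of the congruence hierarchy, while the induction on the number of regular blocks of $\mathcal{P}$ and on the rank deficit $w-w'$ closes the argument.
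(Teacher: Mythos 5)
Your proposal is correct and follows essentially the same route as the paper: it verifies that $\mathcal{W}$ has size $n$ and rank $2w$, observes that the rank-$\le 2w$ condition is algebraic so that the set is closed, and for the reverse inclusion applies the rules of Theorem~\ref{erelations} simultaneously to the paired $\mathcal{L}_\gamma$ and $\mathcal{L}_\gamma^T$ summands arising from each $\mathcal{M}$-block. The paper presents the final combinatorial step in the dual form (eliminating from a candidate generic KCF anything to which a rule can still be applied and concluding that the unique endpoint is \eqref{maxKCF}), but the content of your three-stage constructive reduction and the paper's elimination argument is the same.

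The one point worth tightening is your justification that the moves are \emph{congruence}-orbit degenerations. You argue that, because two skew-symmetric pencils are strictly equivalent iff they are congruent, ``any deformation realised within $\PEN^{ss}_{n\times n}$ automatically corresponds to a congruence deformation.'' The implication is sound as stated, but Theorem~\ref{erelations} only produces an approximating sequence inside the full pencil space $\PEN_{n\times n}$; it does not, on its own, give a sequence that stays skew-symmetric, so the hypothesis ``realised within $\PEN^{ss}_{n\times n}$'' is not free. The precise bridging result is \cite[Lem.~3.8]{DmKa14} (or the stronger \cite[Thm.~3.1]{DmKa14}), which says that for skew-symmetric pencils the congruence-orbit-closure ordering in $\PEN^{ss}_{n\times n}$ coincides with the strict-equivalence-orbit-closure ordering in $\PEN_{n\times n}$. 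The paper invokes this at the very start of its proof; you allude to \cite{DmKa14} as supplementary support, but that specific lemma is in fact what makes the whole reduction to the unstructured rules of Theorem~\ref{erelations} legitimate, and should carry the weight of the argument rather than the ``strictly equivalent iff congruent'' observation.
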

\begin{proof} Taking into account \cite[Lem. 3.8]{DmKa14} (or the stronger result \cite[Thm.~3.1]{DmKa14}), in this proof we work with KCF rather than with the skew-symmetric KCF under congruence of Theorem \ref{lkh}, but we always apply rules from Theorem \ref{erelations} in pairs, such that the corresponding change of the skew-symmetric KCF is obvious.

Each $n \times n$ skew-symmetric matrix pencil of the rank $2 r_1$ less than or equal to $2w$ has the following KCF:
 \begin{equation}\label{anyKCF}
\diag({\cal L}_{\gamma_1},\hdots,{\cal L}_{\gamma_{n-2r_1}}, {\cal L}^T_{\gamma_1},\hdots,{\cal L}^T_{\gamma_{n-2r_1}}, {\cal J}, {\cal J}),
\end{equation}
since left and right singular blocks are paired up according to Theorem \ref{lkh}, the number of left singular blocks is equal to the dimension of the rational left null-space of the pencil, i.e., is equal to $n-2r_1$, and  the blocks ${\cal J}$ of the regular part are also paired up.

Note that KCF of a generic skew-symmetric matrix pencil of rank less than or equal to $2w$ can not contain any regular part (otherwise rules 3 and 4 from Theorem \ref{erelations} can be applied to obtain a more generic skew-symmetric pencil). Moreover, the rank of any of the most generic skew-symmetric matrix pencils of rank at most $2w$ is exactly $2w$ (otherwise, there is an excess of singular blocks and rule 6 from Theorem \ref{erelations} can be applied twice to obtain a more generic skew-symmetric matrix pencil with strictly larger rank). Therefore any of the most generic skew-symmetric matrix pencils of rank at most $2w$ must have KCF consisting of  $n-2w$ blocks ${\cal L}$ and $n-2w$ blocks ${\cal L}^T$:
 \begin{equation}\label{anymKCF}
\diag({\cal L}_{\gamma_1},\hdots,{\cal L}_{\gamma_{n-2w}}, {\cal L}^T_{\gamma_1},\hdots,{\cal L}^T_{\gamma_{n-2w}}).
\end{equation}
To each pencil \eqref{anymKCF} we can apply only rules 1 and 2 from Theorem \ref{erelations} in order to get more generic skew-symmetric matrix pencils of rank at most $2w$, since applying rule 6 would increase the rank and rules 3--5 involve regular blocks. Thus we can only apply rule 1 to  the blocks ${\cal L}$ of \eqref{anymKCF} (and, simultaneously for preserving the skew-symmetric structure of the KCF, rule 2 to  the blocks ${\cal L}^T$ of \eqref{anymKCF}, respectively), which corresponds to finding the most generic $w \times (n-w)$ matrix pencil (and the most generic $(n-w) \times w$ matrix pencil, respectively). For the latter we refer to, e.g., \cite[Thm. 2.6]{DmDo16}, see also \cite{DeEd95,EdEK99,VanD79}\footnote{An equivalent, and more self-contained, argument is the following: since rule 1 (and rule 2) in Theorem \ref{erelations} can be applied to get more generic skew-symmetric pencils as long as there are blocks ${\cal L}_{\gamma_i}$ and ${\cal L}_{\gamma_j}$ (and ${\cal L}_{\gamma_i}^T$ and ${\cal L}_{\gamma_j}^T$) with $|\gamma_i - \gamma_j| \geq 2$, the KCF of the most generic skew-symmetric matrix pencils of rank at most $2w$ must have $n-2w$ blocks ${\cal L}_{\gamma_i}$ and $n-2w$ blocks ${\cal L}_{\gamma_i}^T$, $i=1,2, \ldots, n-2w$, with $|\gamma_i - \gamma_j| \leq 1$, which is only possible if the KCF is \eqref{maxKCF}.}. Therefore the most generic skew-symmetric matrix pencil of rank at most $2w$ has the KCF
\begin{equation}\label{maxKCF}
\diag(\underbrace{{\cal L}_{\alpha+1},\hdots,{\cal L}_{\alpha+1}}_{s},
\underbrace{{\cal L}_{\alpha},\hdots,{\cal L}_{\alpha}}_{n-2w-s},\underbrace{{\cal L}^T_{\alpha+1},\hdots,{\cal L}^T_{\alpha+1}}_{s},\underbrace{{\cal L}^T_{\alpha},\hdots,{\cal L}^T_{\alpha}}_{n-2w-s}),\,
\end{equation}
or equivalently, has the skew-symmetric KCF \eqref{max}.
\end{proof}
\begin{remark}
The KCF in \eqref{maxKCF} is also one of the most generic KCFs of the $n \times n$ general matrix pencils of rank less than or equal to $2w$, obtained in \cite[Thm. 3.2]{DeDo08}. Therefore proving the genericity of \eqref{max} could have been done using \cite[Thm. 3.2]{DeDo08} and \cite[Thm. 3.1]{DmKa14} but we would still have to explain why there are no more generic pencils (essentially, to repeat the proof of Theorem \ref{th:improvedDeDo}).
\end{remark}

Observe that Theorem \ref{th:improvedDeDo} does not cover the case $2w =n$, which generically corresponds to regular skew-symmetric matrix pencils with even number of rows (and columns). In this easy case, the generic pencils clearly have $w$ different eigenvalues which are paired up according to Theorem \ref{lkh}, i.e. the generic KCF is $\diag({\cal H}_{1}(\mu_{1}),\hdots,{\cal H}_{1}(\mu_{w}))$, where $\mu_i \neq \mu_j$ if $i \neq j$, and $i,j\in \{1, 2, \dots , w\}$.

Similarly to results included in \cite{DeDo08,Wate84} for closures of orbits under strict equivalence, next we show that for any $n\times n$ skew-symmetric pencil ${\cal P}$ the set $\overline{\orb^c} ({\cal P})$ (where the closure is taken in principle in the Euclidean metric defined before Theorem \ref{th:improvedDeDo}) is irreducible in the Zariski topology of $\PEN_{n\times n}^{ss}$. The proof of such result also establishes that the closures of $\orb^c ({\cal P})$ in the Euclidean and in the Zariski topologies are equal. In particular, this irreducibility implies the connectivity of $\overline{\orb^c}({\cal P})$ in both the Zariski and Euclidean topologies (recall that we work over $\mathbb C$).  Note also that by \cite[Closed Orbit Lemma, p. 53]{Bore91}, $\orb^c ({\cal P})$ is open in its closure again in both the Zariski and Euclidean topologies. These properties hold, of course, for the generic pencil ${\cal W}$ in \eqref{max}. The proof of Lemma \ref{zariskilemma} essentially repeats the proof of \cite[Lem.~3.4]{DeDo08} where the analogous result for the general matrix pencils under strict equivalence is derived.
\begin{lemma} \label{zariskilemma}
The closures in the Euclidean topology and in the Zariski topology of the congruence orbit $\orb^c ({\cal P})$ of any $n \times n$ complex skew-symmetric matrix pencil  ${\cal P}$ are equal in $\PEN_{n\times n}^{ss}$. Moreover, $\overline{\orb^c} ({\cal P})$ is an irreducible manifold in the Zariski topology of $\PEN_{n\times n}^{ss}$.
\end{lemma}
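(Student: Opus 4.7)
The plan is to realize the orbit as the image of an irreducible algebraic variety under a morphism, following the argument of \cite[Lem.~3.4]{DeDo08} almost verbatim but with the congruence action in place of strict equivalence. Writing $\mathcal{P} = \lambda A - B$, I would consider the polynomial map
\[
\phi \colon GL_n(\mathbb{C}) \longrightarrow \PEN_{n\times n}^{ss}, \qquad S \longmapsto S^T (\lambda A - B) S,
\]
whose image is, by definition, $\orb^c(\mathcal{P})$. Each entry of $\phi(S)$ (viewed as a coefficient of $\lambda$ or of the constant term) is a polynomial in the entries of $S$, so $\phi$ extends to a morphism of complex algebraic varieties once $\PEN_{n\times n}^{ss}$ is identified with the affine space $\mathbb{C}^{n(n-1)}$ of skew-symmetric pencils.

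Next, I would note that $GL_n(\mathbb{C})$ is the principal Zariski-open subset $\{\det S \neq 0\}$ of the irreducible affine space $\mathbb{C}^{n^2}$, and is therefore itself irreducible. Since the image of an irreducible space under a continuous map is irreducible in the induced topology, and since the closure of an irreducible set is irreducible, the Zariski closure of $\phi(GL_n(\mathbb{C})) = \orb^c(\mathcal{P})$ is an irreducible algebraic set in $\PEN_{n\times n}^{ss}$. This already settles the irreducibility half of the claim.

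For the equality of the two closures I would invoke Chevalley's theorem, which guarantees that the image of a morphism of complex algebraic varieties is a constructible set, i.e.\ a finite union of locally closed sets. The classical fact that a constructible subset of complex affine space has the same closure in the Euclidean and the Zariski topologies (this is exactly what is used in \cite[Lem.~3.4]{DeDo08}) then applies to $\orb^c(\mathcal{P}) = \phi(GL_n(\mathbb{C}))$ and yields the equality statement. Consequently, $\overline{\orb^c}(\mathcal{P})$ is unambiguously defined and is an irreducible algebraic variety.

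Finally, the word ``manifold'' in the statement is justified by observing that $\orb^c(\mathcal{P})$ itself is a smooth complex manifold: as a $GL_n(\mathbb{C})$-orbit under a regular action, it is homeomorphic to the homogeneous space $GL_n(\mathbb{C})/\mathrm{Stab}(\mathcal{P})$ of the complex Lie group $GL_n(\mathbb{C})$ by the closed subgroup stabilizing $\mathcal{P}$, and this endows it with a natural structure of complex manifold (of dimension equal to $\dim \orb^c(\mathcal{P})$). I do not expect any serious obstacle here; the only delicate point is citing the constructible-closure lemma correctly, and the argument is otherwise a direct transcription of the general-pencil proof in \cite[Lem.~3.4]{DeDo08}.
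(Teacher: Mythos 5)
Your proposal is correct and follows essentially the same strategy as the paper's proof, which likewise adapts \cite[Lem.~3.4]{DeDo08} by realizing $\orb^c(\mathcal{P})$ as the image of the polynomial map $S \mapsto S^T(\lambda A - B)S$, deducing irreducibility from irreducibility of the source, and using the coincidence of Zariski and Euclidean closures for such images. The only cosmetic difference is that the paper first replaces $GL_n(\mathbb{C})$ by the full affine space $\mathbb{C}^{n^2}$ via a density argument before invoking the Waterhouse results, whereas you invoke Chevalley's theorem directly on the restriction to $GL_n(\mathbb{C})$; the underlying facts are the same.
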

\begin{proof}
We identify the space of skew-symmetric matrix pencils of size $n \times n$ with $\mathbb C^{n^2-n}$ (for the pencil ${\cal P} = \lambda A - B$ we have $(n^2-n)/2$ parameters in each of $A$ and $B$). All the nonsingular matrices $S \in \mathbb C^{n \times n}$ form a dense open set $U$ of $\mathbb C^{n^2}$ both in the Zariski and in the Euclidean topology. We consider the polynomial mapping $\phi_{{\cal M}}$ from $\mathbb C^{n^2}$ to $\mathbb C^{n^2-n}$ defined by sending $S$ to $\lambda S^TAS - S^TBS$. Thus $\phi_{{\cal M}}(U) = \orb^c ({\cal P})$. Notably, in any topology, for a set $V$ and a continuous mapping $\phi$, we have $\phi (\overline{V}) \subset \overline{\phi(V)} $, which implies $\overline{\phi (\overline{V})} \subset \overline{\phi(V)} $. Therefore, we have
$
\overline{\phi_{{\cal M}}(U)} = \overline{\phi_{{\cal M}}(\mathbb C^{n \times n})},
$
where the closures can be taken either in the Zariski or the Euclidean topology, since in this particular  case they are equal.
The remaining part of the proof is to note that $\overline{\phi_{{\cal M}}(\mathbb C^{n^2})}$ is an irreducible set in the Zariski topology, by \cite[Sect. 1]{Wate84}, and is equal to $\overline{\orb^c} ({\cal P})$, by the discussion above.
\end{proof}

\hide{
Note that Theorem \ref{th:improvedDeDo} does not cover the case $r_1 = \min \{m_1 , n_1 \}$, which is completely different since in this case we are considering {\em all} matrix pencils of size $m_1 \times n_1$. In fact, there is only one ``generic'' Kronecker canonical form for matrix pencils of full rank. If $m_1 = n_1$, then this generic form obviously corresponds to regular matrix pencils, i.e., they do not have minimal indices at all,  with all their eigenvalues simple. If $m_1 \ne n_1$, then the ``generic'' canonical form is presented in Theorems \ref{th:pencgenfull1} and \ref{th:pencgenfull2} depending on whether $m_1 < n_1$ or $m_1 > n_1$. This result is known at least since \cite{vandoorenphd} (see also \cite{DeEd95} and \cite{EdEK99}) and is stated for completeness.

\begin{theorem} \label{th:pencgenfull1} Let us define, in the set of $m_1\times n_1$ complex matrix pencils with $0 < m_1 < n_1$, the following KCF:
\begin{equation}\label{2max}
{\cal
K}_{right} (\lambda)=\diag(\underbrace{L_{\alpha_1+1},\hdots,L_{\alpha_1+1}}_{s_1},
\underbrace{L_{\alpha_1},\hdots,L_{\alpha_1}}_{n_1-m_1-s_1})\, ,
\end{equation}
where $\alpha_1= \lfloor m_1/(n_1-m_1) \rfloor$ and
$s_1= m_1 \, \mathrm{mod}\, (n_1-m_1)$.
Then, $\overline{\orb^e}({\cal K}_{right}) = \PEN_{m_1 \times n_1}$.
\end{theorem}

\begin{theorem} \label{th:pencgenfull2} Let us define, in the set of $m_1\times n_1$ complex matrix pencils with $0 < n_1 < m_1$, the following KCF:
\begin{equation}\label{3max}
{\cal
K}_{left} (\lambda)=\diag(\underbrace{L_{\beta_1 +1}^T,\hdots,L_{\beta_1 +1}^T}_{t_1},
\underbrace{L_{\beta_1}^T,\hdots,L_{\beta_1}^T}_{m_1-n_1-t_1} )\, ,
\end{equation}
where $\beta_1= \lfloor n_1/(m_1-n_1) \rfloor$ and $t_1= n_1 \, \mathrm{mod} \, (m_1-n_1)$.
Then, $\overline{\orb^e}({\cal K}_{left}) = \PEN_{m_1 \times n_1}$.
\end{theorem}
}
\hide{
 (since any perturbation $E \in \POL$ results in a perturbation ${\cal C}_{E}^1 \in \GSYL$)
 (since $f$ is isometry)
\begin{lemma}\label{equivincl}
Let $P,Q \in \POL$ then $\overline{O}(P) \supseteq \overline{O}(Q)$ if and only if $\overline{O}({\cal C}_{P}^1) \supseteq \overline{O}({\cal C}_{Q}^1)$.
\end{lemma}
\begin{proof}
Assume $\overline{O}(P) \supseteq \overline{O}(Q)$ but $\overline{O}({\cal C}_{P}^1) \not\supseteq \overline{O}({\cal C}_{Q}^1)$. Therefore  That is $\overline{O^e}({\cal C}_{P}^1) \cap \GSYL \not\supseteq \overline{O^e}({\cal C}_{Q}^1) \cap \GSYL$. For any $x \in \overline{O^e}({\cal C}_{P}^1)\cap \GSYL$
\end{proof}
}

For an $n\times n$ skew-symmetric pencil $\lambda A - B$, define the {\it dimension} of $\orb^c (\lambda A - B)$
to be the dimension of the tangent space to this orbit
\begin{equation*}\label{taneqsp}
\tsp_{\lambda A - B}^c:=\{\lambda (X^TA+AX) - (X^TB + BX):
X\in{\mathbb
C}^{n\times n}\}
\end{equation*}
at the point $\lambda A - B$.
The orthogonal complement (with respect to the Frobenius inner product) to $\tsp_{\lambda A - B}^c$,
is called the normal space to $\orb^c (\lambda A - B)$ at the point $\lambda A - B$. The dimension of the normal space is the {\it codimension} of the congruence orbit of $\lambda A - B$ and is equal to $n(n-1)$ minus the dimension of the congruence orbit of $\lambda A - B$.
Explicit expressions for the codimensions of congruence orbits of skew-symmetric pencils in $\PEN_{n\times n}^{ss}$ are presented in \cite{DmKS13} and implemented in the MCS (Matrix Canonical Structure) Toolbox \cite{DmJK13, Joha06}.
Using \cite[Thm.~3]{DmKS13}, see also \cite[Thm. 2.8]{DmJK13}, applied to the skew-symmetric pencil ${\cal W}$ in \eqref{max}, the codimension of $\orb^c ({\cal W})$ in $\PEN_{n\times n}^{ss}$ is $\cod \orb^c ({\cal W}) = \sum_{i<j} (2 \max \{m_i,m_j \} + \varepsilon_{ij})$, where $m_i$ and $m_j$ are the indices of the ${\cal M}$ blocks (either $\alpha$ or $\alpha +1$), $\varepsilon_{ij} = 2$ if $m_i=m_j$ and $\varepsilon_{ij} = 1$ otherwise, resulting in:
\begin{equation} \label{codimcomp}
\begin{aligned}
&\cod \orb^c ({\cal W}) = (n-2w-s)(n-2w-s-1)(2\alpha +2)/2  \\ 
&\phantom{a}  + s(s-1)(2(\alpha+1) +2)/2 + s(n-2w-s)(2(\alpha+1) +1)\\
&\phantom{a} = (n-2w-s)(n-2w-s-1)(\alpha +1) + s(s-1)(\alpha+1)  + s(s-1) \\
&\phantom{a} + 2s(n-2w-s)(\alpha+1) + s(n-2w-s)\\
&\phantom{a} = (\alpha +1) ((n-2w-s)(n-2w-1) + s(n-2w-1)) + s(n-2w-1) \\
&\phantom{a} =(n-2w)(n-2w-1)(\alpha +1)+s(n-2w-1) \\
&\phantom{a} =(n-2w-1)((n-2w)\alpha +s + n-2w)=(n-2w-1)(n-w).
\end{aligned}
\end{equation}

\hide{
\begin{theorem} \label{th-codimensions}
Let $n$ and $w$ be integers such that $n \geq 2$ and $1 \leq 2w \leq n-1$ and let ${\cal W}(\lambda)$, be the matrix polynomials with the complete eigenstructures \eqref{max}.
Then the codimension of $O^c({\cal W}(\lambda))$ in $\PEN_{n\times n}^{ss}$ is
$(n-2w-1)(n-w).$

\hide{  {2. The codimension in $\POL_{d,m\times n}$ of the set of $m \times n$ complex matrix polynomials with rank at most $r$ and grade $d$ is equal to}

{\red  {(i)  $(n-r)(m(d+1)-r)$ if $m\geq n$, and}}

{\red  {(ii) $(m-r)(n(d+1)-r)$  if $m\leq n$.}}
}
\end{theorem}

\begin{proof}  The codimension of $O^c({\cal W}(\lambda))$ can be computed using  \cite[Thm.~3]{DmKS13} as follows:
\begin{align*}
&(n-r)(2m+n(d-1)-r) + ((n(d-1)+r) -rd + a) (m-n)\\
&\phantom{a}=(n-r)(2m-r)+(n-r)n(d-1)+n(d-1)(m-n)-r(d-1)(m-n)+ a(m-n) \\
&\phantom{a} =(n-r)(2m-r)+(n-r)n(d-1)+(n-r)(d-1)(m-n)+ a(m-n) \\
&\phantom{a}=(n-r)(2m-r+n(d-1)+(d-1)(m-n))+ a(m-n) \\
&\phantom{a} =(n-r)((d+1)m -r)+ a(m-n).
\end{align*}
\end{proof}
}

\section{Preliminaries on skew-symmetric matrix polynomials} \label{sect.prempolys}
We consider skew-symmetric $m\times m$ matrix polynomials $P(\lambda)$ of grade $d$, i.e., of degree less than or equal to $d$, over~$\mathbb C$:
\begin{equation*}
P(\lambda) = \lambda^{d}A_{d} + \dots +  \lambda A_1 + A_0,
\quad A_i^T=-A_i, \ A_i \in \mathbb C^{m \times m} \ \text{for } i=0, \dots, d.
\end{equation*}
Denote the vector space of the $m \times m$ skew-symmetric matrix polynomials of the  grade $d$ by $\POL_{d, m\times m}^{ss}$.
Observe that $\POL_{1, n\times n}^{ss}$ is $\PEN_{n\times n}^{ss}$.
If there is no risk of confusion we will write $\POL$ instead of $\POL_{d, m\times m}^{ss}$. As in the case of pencils, see Sections \ref{pencils} and \ref{sec.main1}, by using  the Frobenius matrix norm of complex matrices \cite{Highambook} a distance on $\POL_{d, m\times m}^{ss}$ is defined as $d(P,P') = \left( \sum_{i=0}^d || A_i - A'_i ||_F^2 \right)^{\frac{1}{2}}$, making $\POL_{d, m\times m}^{ss}$ to a metric space  with the induced Euclidean topology. For convenience,  define the Frobenius norm of the matrix polynomial $P$ as $||P(\lambda)||_F = \left( \sum_{i=0}^d || A_i ||_F^2 \right)^{\frac{1}{2}}$.

Two matrix polynomials $P(\lambda)$ and $Q(\lambda)$ are called {\it unimodularly congruent} if $F(\lambda)^T P(\lambda) F(\lambda)=Q(\lambda)$ for some unimodular matrix polynomial $F(\lambda)$ (i.e. $\det F(\lambda) \in \mathbb C \backslash \{0\}$), see also \cite{MMMM13}.
\begin{theorem}{\rm \cite{MMMM13}} \label{tsmiths}
Let $P(\lambda)$ be a skew-symmetric $m\times m$ matrix polynomial. Then there exists $r \in \mathbb N$ with $2r \le m$ and a unimodular matrix polynomial $F(\lambda)$ such that
\begin{equation*}
F(\lambda)^T P(\lambda) F(\lambda) =
\begin{bmatrix}
0 & g_1(\lambda)\\
-g_1(\lambda) & 0
\end{bmatrix} \oplus
\dots
\oplus
\begin{bmatrix}
0 & g_r(\lambda)\\
-g_r(\lambda) & 0
\end{bmatrix} \oplus
0_{m-2r}=:S(\lambda),
\end{equation*}
where $g_j$ is monic for $j=1, \dots, r$ and $g_j(\lambda)$ divides $g_{j+1}(\lambda)$ for $j=1, \dots, r-1$. Moreover, the canonical form $S(\lambda)$ is unique.
\end{theorem}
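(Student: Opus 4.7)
My plan is to mimic the classical Smith-form reduction, but working with unimodular congruences $F^{T}PF$ (rather than unimodular equivalences $FPG$) so that the skew-symmetric structure is preserved throughout. I proceed by induction on $m$. The base case $m \le 1$ is trivial since the only skew-symmetric $m\times m$ matrix polynomial is $0$, for which $r=0$ works. For the inductive step, assuming the result for sizes smaller than $m$ and with $P \not\equiv 0$, I aim to reduce $P$, by a single unimodular congruence, to
\[
\begin{bmatrix} 0 & g_1(\lambda) & 0 \\ -g_1(\lambda) & 0 & 0 \\ 0 & 0 & P''(\lambda) \end{bmatrix},
\]
with $g_1$ monic and dividing every entry of the $(m-2)\times(m-2)$ skew-symmetric $P''(\lambda)$; applying the induction hypothesis to $P''$ then yields $S(\lambda)$ and the divisibility chain.

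To obtain that block form, I first pick a nonzero entry of $P$ of minimum degree. Skew-symmetry forces the diagonal to vanish, so this entry lies off-diagonal, and a permutation congruence moves it to position $(1,2)$; call the $(1,2)$ entry the pivot $p(\lambda)$. Unimodular congruences of the form $F^{T}PF$ with $F = I - q(\lambda)E_{2k}$ (for $k\ge 3$) simultaneously perform the column operation $C_k \leftarrow C_k - q\, C_2$ and the matching row operation $R_k \leftarrow R_k - q\, R_2$, which together preserve skew-symmetry and effect a Euclidean reduction of $P_{1k}$ modulo $p$; analogous congruences with $F = I + q(\lambda)E_{1k}$ reduce the entries $P_{2k}$. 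If any reduction yields a nonzero remainder of degree strictly less than $\deg p$, I promote that remainder to position $(1,2)$ and restart; because degrees are non-negative integers, only finitely many restarts occur, after which $p$ divides every entry of rows $1$, $2$ and columns $1$, $2$. A clean-up pass with these operations then zeroes those entries completely.

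The main obstacle is enforcing the divisibility $g_j \mid g_{j+1}$, which requires the current $p$ to divide every entry of $P''$, not merely those in the first two rows and columns. If some entry of $P''$ fails this divisibility, I would apply a further congruence with $F = I + E_{1,k}$ for a suitable $k \ge 3$; this injects a copy of $\pm p$ into position $(k,2)$ and position $(2,k)$ without otherwise disturbing rows $1$, $2$ and columns $1$, $2$. A subsequent elementary congruence then produces, via Euclidean division of the offending entry of $P''$ against these fresh $\pm p$ entries, a nonzero polynomial of degree strictly less than $\deg p$ somewhere in the matrix. Restarting the entire reduction with this new entry as pivot therefore strictly decreases $\deg p$, so this outer loop also terminates in finitely many rounds. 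At termination, $p$ divides every entry of $P''$, a scalar unimodular congruence normalizes $p$ to the monic polynomial $g_1$, and the induction hypothesis applied to $P''$ yields monic $g_2,\ldots,g_r$ with $g_j \mid g_{j+1}$; the inequality $g_1 \mid g_2$ is automatic since $g_1$ divides every entry of $P''$ and hence every invariant factor of $P''$.

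Uniqueness of $S(\lambda)$ follows from the uniqueness of the Smith normal form: a direct computation on the displayed direct-sum form shows that its Smith invariant factors are $g_1, g_1, g_2, g_2, \ldots, g_r, g_r$ (padded with trailing zero entries matching the rank defect). Since unimodular congruence is in particular a unimodular equivalence, these invariant factors are invariants of $P(\lambda)$, so the monic polynomials $g_j$ are uniquely determined by $P$.
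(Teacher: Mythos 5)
The paper itself gives no proof of this theorem: it cites \cite{MMMM13} and uses the result as a black box, so there is no in-paper argument to compare yours against. Judged on its own, your proof is correct. The strategy—symmetric Gaussian elimination by unimodular congruences, with a pivot of minimal degree, Euclidean reductions on rows and columns $1,2$, a deflation to a skew-symmetric $(m-2)\times(m-2)$ block $P''$, and the classical ``inject a copy of the pivot and divide'' trick to force the pivot to divide all of $P''$ (which is what makes the divisibility chain $g_j \mid g_{j+1}$ come out of the induction)—is exactly the structure-preserving analogue of the textbook Smith-form reduction, and all the individual congruences you use do preserve skew-symmetry. The termination argument (each restart, inner or outer, strictly drops the pivot degree) is sound, and the uniqueness argument via the invariant factors $g_1,g_1,\dots,g_r,g_r$ of $S(\lambda)$ and the uniqueness of the (unstructured) Smith form is the standard one. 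The only thing I'd tighten in a written-up version is the phrase ``a subsequent elementary congruence then produces \ldots a nonzero polynomial of degree strictly less than $\deg p$''; as you note implicitly, this requires one check—that the offending entry $P''_{kl}$ is off-diagonal (automatic, since $P''$ is skew-symmetric with zero diagonal) and that the column operation $C_l \mapsto C_l - qC_2$ acting against the freshly injected $(k,2)$-entry does not simultaneously destroy that entry, which a short computation confirms. Compared with the proof in \cite{MMMM13}, which first establishes the pairing of invariant polynomials via determinantal/compound-matrix considerations and then constructs the congruence, your approach is more directly algorithmic; both are legitimate, and yours has the advantage of being self-contained and elementary.
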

Similarly to the unimodular congruence, two matrix polynomials $P(\lambda)$ and $Q(\lambda)$ are called {\it unimodularly equivalent} if $U(\lambda) P(\lambda) V(\lambda)=Q(\lambda)$ for some unimodular matrix polynomials \ $U(\lambda)$ \ and \ $V(\lambda)$ \ (i.e. $\det U(\lambda), \det V(\lambda) \in \mathbb C \backslash \{0\}$). Note that the canonical form in Theorem \ref{tsmiths} is the skew-symmetric version of the well-known Smith form for matrix polynomials under unimodular equivalence \cite{Gant59,MMMM13}.


\hide{
\begin{definition}
Let $P(\lambda)$ and $Q(\lambda)$ be two $m \times n$ matrix polynomials. Then $P(\lambda)$ and $Q(\lambda)$ are {\it unimodularly equivalent} if there exist two unimodular matrix polynomials $U(\lambda)$ and $V(\lambda)$ (i.e., $\det U(\lambda), \det V(\lambda) \in \mathbb C \backslash \{0\}$) such that
$$
U(\lambda) P(\lambda) V(\lambda) = Q(\lambda).
$$
\end{definition}
The transformation $P(\lambda) \mapsto U(\lambda) P(\lambda) V(\lambda)$
is called a unimodular equivalence transformation
and the canonical form with respect to this transformation is the {\it Smith form} \cite{Gant59}, recalled in the following theorem.
\begin{theorem}{\rm \cite{Gant59}} \label{tsmiths}
Let $P(\lambda)$ be an $m\times n$ matrix polynomial over $\mathbb C$. Then there exists $r \in \mathbb N$, $r \le \min \{ m, n \}$ and unimodular matrix polynomials $U(\lambda)$ and $V(\lambda)$ over $\mathbb C$ such that
\begin{equation} \label{smithform}
U(\lambda) P(\lambda) V(\lambda) =
\left[
\begin{array}{ccc|c}
g_1(\lambda)&&0&\\
&\ddots&&0_{r \times (n-r)}\\
0&&g_r(\lambda)&\\
\hline
&0_{(m-r) \times r}&&0_{(m-r) \times (n-r)}
\end{array}
\right],
\end{equation}
where $g_j(\lambda)$ is monic for $j=1, \dots, r$ and $g_j(\lambda)$ divides $g_{j+1}(\lambda)$ for $j=1, \dots, r-1$. Moreover, the canonical form \eqref{smithform} is unique.
\end{theorem}
}
Define the (normal) {\it rank} of the matrix polynomial $P(\lambda)$ to be the integer $2r$ from Theorem \ref{tsmiths}. All $g_j(\lambda), j=1, \dots, r,$ are called {\it invariant polynomials} of $P(\lambda)$, and each of them can be uniquely factored as
$$
g_j(\lambda) = (\lambda - \alpha_1)^{\delta_{j1}} \cdot
(\lambda - \alpha_2)^{\delta_{j2}}\cdot \ldots \cdot
(\lambda - \alpha_{l_j})^{\delta_{jl_j}},
$$
where $l_j \ge 0, \ \delta_{j1}, \dots, \delta_{jl_j} > 0$ are integers. If $l_j=0$ then $g_j(\lambda)=1$. The complex numbers $\alpha_1, \dots, \alpha_{l_j}$ are finite eigenvalues of $P(\lambda)$. The {\it elementary divisors} of $P(\lambda)$ associated with each finite eigenvalue $\alpha_{k}$ is the collection of factors $(\lambda - \alpha_{k})^{\delta_{jk}}$ (possibly with repetitions).

If zero is an eigenvalue of $\rev P(\lambda):= \lambda^dP(1/\lambda)$ then we say that $\lambda = \infty$ is an eigenvalue of the matrix polynomial $P(\lambda)$ of grade $d$. The elementary divisors $\lambda^{\gamma_k}, \gamma_k > 0,$ for the zero eigenvalue of $\rev P(\lambda)$ are the elementary divisors associated with the infinite eigenvalue of $P(\lambda)$.

For an $m\times n$ matrix polynomial $P(\lambda)$, define the left and right null-spaces, over the field of rational functions $\mathbb C(\lambda)$, as follows:
\begin{align*}
{\cal N}_{\rm left}(P)&:= \{y(\lambda)^T \in \mathbb C(\lambda)^{1 \times m}: y(\lambda)^TP(\lambda) = 0_{1\times n} \}, \\
{\cal N}_{\rm right}(P)&:= \{x(\lambda) \in \mathbb C(\lambda)^{n\times 1}: P(\lambda)x(\lambda) = 0_{m\times 1}\}.
\end{align*}
Each subspace ${\cal V}$ of $\mathbb C(\lambda)^n$ has bases consisting entirely of vector polynomials.
A basis of ${\cal V}$ consisting of vector polynomials whose sum of degrees is minimal among all bases of ${\cal V}$ consisting of vector polynomials is called a minimal basis of ${\cal V}$. The ordered list of degrees of the vector polynomials in any minimal basis of ${\cal V}$ is always the same. These degrees are called the minimal indices of ${\cal V}$ \cite{Forn75,Kail80}. This allows us to define the minimal indices of a matrix polynomial:
let the sets $\{y_1(\lambda)^T,...,y_{m-r}(\lambda)^T\}$ and $\{x_1(\lambda),...,x_{n-r}(\lambda)\}$ be minimal bases of ${\cal N}_{\rm left}(P)$ and ${\cal N}_{\rm right}(P)$, respectively, ordered so that $0 \le \deg(y_1) \le \dots \le \deg(y_{m-r})$ and $0\le \deg(x_1) \le \dots \le \deg(x_{n-r})$. Let $ \eta_k = \deg(y_k)$ for $k=1, \dots , m-r$ and $ \varepsilon_k = \deg(x_k)$ for $k=1, \dots , n-r$. Then the scalars $0 \le  \eta_1 \le \eta_2 \le  \dots \le \eta_{m-r}$ and $0 \le  \varepsilon_1 \le \varepsilon_2 \le  \dots \le \varepsilon_{n-r}$ are, respectively, the {\it left} and {\it right minimal indices} of~$P(\lambda)$. Note also that for a skew-symmetric matrix polynomial we have that  $x_i(\lambda) = y_i(\lambda)$ and thus $\eta_i = \varepsilon_i,$ for $i = 1, \dots , m-r$.

Define the {\it complete eigenstructure} of a matrix polynomial $P(\lambda)$ as all the finite and infinite eigenvalues, the corresponding elementary divisors, and the left and right minimal indices of $P(\lambda)$. The set of matrix polynomials of the same size, grade, and with the same complete eigenstructure as $P(\lambda)$ is called an {\it orbit} of $P(\lambda)$, denoted $\orb(P)$. In this paper, if $P(\lambda)$ is skew-symmetric, then $\orb(P)$ contains all the  skew-symmetric polynomials of the same size, grade, and with the same complete eigenstructure as $P(\lambda)$.


A matrix pencil ${\cal L}_{P(\lambda)}$ is called a {\it linearization} of a matrix polynomial $P(\lambda)$ if they have the same finite elementary divisors, the same number of left minimal indices, and the same number of right minimal indices \cite{DeDM14}. If in addition, $\rev {\cal L}_{P(\lambda)}$ is a linearization of $\rev P(\lambda)$ then ${\cal L}_{P(\lambda)}$ is called a {\it strong linearization} of $P(\lambda)$ and, then, ${\cal L}_{P(\lambda)}$ and $P(\lambda)$ have also the same infinite elementary divisors.  Linearizations are powerful tools for investigation of matrix polynomials \cite{DeDM14,GoLR09}.

From now on we restrict to skew-symmetric matrix polynomials of odd grades. The reason is that there is no skew-symmetric linearization-template (i.e., a skew-symmetric companion form in the language of \cite[Sects. 5 and 7]{DeDM14}) for skew-symmetric matrix polynomials of even grades \cite{DeDM14,MMMM13}.

The following pencil-template is known to be a skew-symmetric strong linearization of all the skew-symmetric $m\times m$ matrix polynomials $P(\lambda)$ of odd grade $d$ \cite{MMMM13}, see also \cite{AnVo04,MMMM10}:
\begin{align*}
{\cal L}_{P(\lambda)}(i,i)&=\begin{cases}
\lambda A_{d-i+1} + A_{d-i} & \text{if } i \text{ is odd,}\\
0 & \text{if } i \text{ is even,}\\
\end{cases}\\
{\cal L}_{P(\lambda)}(i,i+1)&=\begin{cases}
- I_m & \text{if } i \text{ is odd,}\\
- \lambda I_m & \text{if } i \text{ is even,}\\
\end{cases} \quad
{\cal L}_{P(\lambda)}(i+1,i)=\begin{cases}
I_m & \text{if } i \text{ is odd,}\\
\lambda I_m & \text{if } i \text{ is even,}\\
\end{cases}
\end{align*}
where ${\cal L}_{P(\lambda)}(j,k)$ denotes an $m \times m$ matrix pencil which is at the position $(j,k)$ of the block pencil ${\cal L}_{P(\lambda)}$ and  $j,k=1, \dots , d$. The blocks of ${\cal L}_{P(\lambda)}$ in positions which are not specified above are zero. We rewrite this strong linearization template in  a matrix form:
\hide{
\begin{equation}
\label{linform}
{\cal L}_{P(\lambda)}=
\begin{bmatrix}
\lambda A_d + A_{d-1}& -I &&&&&\\
I &0&-\lambda I&&&&\\
&\lambda I &\ddots&\ddots&&&\\
&&\ddots&0&-\lambda I&&\\
&&&\lambda I&\lambda A_{3}+A_{2}&-I&\\
&&&&I&0&-\lambda I\\
&&&&&\lambda I&\lambda A_1+ A_0\\
\end{bmatrix}
\end{equation}
or }
\begin{equation}
\label{linform}
{\cal L}_{P(\lambda)}=
\lambda
\begin{bmatrix}
A_d&&&&&\\
&\ddots&\ddots&&&\\
&\ddots&0&-I&&\\
&&I&A_{3}&&\\
&&&&0&-I\\
&&&&I&A_1\\
\end{bmatrix} - \begin{bmatrix}
-A_{d-1}&I&&&&\\
-I&0&\ddots&&&\\
&\ddots&\ddots&&&\\
&&&-A_{2}&I&\\
&&&-I&0&\\
&&&&&-A_0\\
\end{bmatrix}.
\end{equation}
For a skew-symmetric matrix polynomial, strong linearization \eqref{linform} preserves finite and infinite elementary divisors of $P(\lambda)$ but does not preserve the left and right minimal indices of $P(\lambda)$. Nevertheless, the relations between the minimal indices of a skew-symmetric matrix polynomial $P(\lambda)$ and its linearization \eqref{linform} are derived in \cite{Dmyt15}, see also \cite{DeDM10, DeDM12}.
\begin{theorem}{\rm \cite{Dmyt15}} \label{skewlin}
Let $P(\lambda)$ be a skew-symmetric $m\times m$ matrix polynomial of odd grade $d \ge 3$, and let ${\cal L}_{P(\lambda)}$ be its linerization \eqref{linform} given above. If $0 \le  \varepsilon_1 \le \varepsilon_2 \le~\dots \le \varepsilon_t$ are the right (=left) minimal indices of $P(\lambda)$ then
$$0 \le  \varepsilon_1 + \frac{1}{2}(d-1) \le \varepsilon_2 + \frac{1}{2}(d-1) \le  \cdots \le \varepsilon_t + \frac{1}{2}(d-1)$$
are the right (=left) minimal indices of ${\cal L}_{P(\lambda)}$.
\end{theorem}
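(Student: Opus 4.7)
My plan is to lift a minimal basis of ${\cal N}_{\rm right}(P)$ to a minimal basis of ${\cal N}_{\rm right}({\cal L}_{P(\lambda)})$ in an explicit way, using the relation $P(\lambda)x=0$ to keep degrees under control, and then to invoke standard tools (Forney's criterion, the index count for strong linearizations) to finish. First, a block-by-block inspection of \eqref{linform} confirms that ${\cal L}_{P(\lambda)}$ is itself skew-symmetric: the odd-diagonal blocks $\lambda A_{d-i+1}+A_{d-i}$ inherit skew-symmetry from $A_i^T=-A_i$, while the off-diagonal pairs $(-I,I)$ and $(-\lambda I,\lambda I)$ are negatives of each other's transposes. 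Hence the left and right minimal indices of ${\cal L}_{P(\lambda)}$ coincide, and it suffices to work with the right ones.

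Given a minimal basis $\{x_1,\ldots,x_t\}$ of ${\cal N}_{\rm right}(P)$ with degrees $\varepsilon_1\le\cdots\le\varepsilon_t$, I write a candidate $\tilde x_i=(v_1,\ldots,v_d)^T$ in ${\cal N}_{\rm right}({\cal L}_{P(\lambda)})$ by setting $v_d=x_i$. The even block-rows of ${\cal L}_{P(\lambda)}\tilde x_i=0$ then force $v_{d-2\ell}=\lambda^\ell x_i$ for $\ell=0,1,\ldots,(d-1)/2$, while the odd block-rows determine the even-indexed components iteratively, and the final block-row reduces to $P(\lambda)x_i=0$, which holds by assumption. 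Unwinding this recursion yields the closed form $v_{2\ell}=\sum_{j=0}^{2\ell-1}\lambda^{(d+2\ell-1)/2-j}A_{d-j}x_i$.

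The technical heart of the proof is the degree bound $\deg\tilde x_i\le\varepsilon_i+(d-1)/2$. The odd-indexed components satisfy $\deg v_{d-2\ell}=\ell+\varepsilon_i$, which attains its maximum $\varepsilon_i+(d-1)/2$ at $v_1$. For the even-indexed components, the naive expression carries terms of a priori degree larger than $\varepsilon_i+(d-1)/2$. However, writing $x_i=\sum_s x_{i,s}\lambda^s$ and comparing the coefficient of $\lambda^m$ in $v_{2\ell}$ with the coefficient of $\lambda^{m+(d-2\ell+1)/2}$ in $P(\lambda)x_i$ (which vanishes), one sees that the high-degree terms of $v_{2\ell}$ form a partial sum whose complement in the full relation involves only coefficients $x_{i,s}$ with $s>\varepsilon_i$, all of which are zero. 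This forces $\deg v_{2\ell}\le\varepsilon_i+(d-2\ell-1)/2$, strictly below $\varepsilon_i+(d-1)/2$ for $\ell\ge1$. This cancellation argument is the main obstacle, requiring careful bookkeeping of the coefficients appearing in the iterative formula against the full relation coming from $P(\lambda)x_i=0$.

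Finally, Forney's criterion shows that $\{\tilde x_1,\ldots,\tilde x_t\}$ is a minimal basis of ${\cal N}_{\rm right}({\cal L}_{P(\lambda)})$: the leading coefficient of $\tilde x_i$ at degree $\varepsilon_i+(d-1)/2$ lives in the top block $v_1=\lambda^{(d-1)/2}x_i$ and coincides with the leading coefficient of $x_i$, which is linearly independent across $i$ by the Forney property of $\{x_i\}$; the pointwise rank at a finite $\lambda_0$ is supplied by the top block when $\lambda_0\neq0$ and by the bottom block $v_d=x_i$ when $\lambda_0=0$. Since ${\cal L}_{P(\lambda)}$ is a strong linearization of $P(\lambda)$ \cite{MMMM13}, ${\cal N}_{\rm right}({\cal L}_{P(\lambda)})$ has dimension $t$, so these $t$ vectors exhaust the minimal basis and their degrees $\varepsilon_i+(d-1)/2$ are precisely the right minimal indices of ${\cal L}_{P(\lambda)}$.
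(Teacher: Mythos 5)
The paper does not include a proof of Theorem~\ref{skewlin}; it cites the result directly from \cite{Dmyt15}. So there is no internal argument to compare against, and I can only assess your blind proof on its own merits.

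Your proof is correct. I verified the key computations: the block structure of \eqref{linform} is indeed skew-symmetric (so left equals right minimal indices and it suffices to track one side); the even block rows of ${\cal L}_{P(\lambda)}\tilde x_i=0$ force $v_{d-2\ell}=\lambda^\ell x_i$; the odd block rows $1,3,\dots,d-2$ give the recursion $v_{2k}=\lambda v_{2k-2}+(\lambda A_{d-2k+2}+A_{d-2k+1})\,\lambda^{(d+1)/2-k}x_i$ with $v_0=0$, whose closed form is exactly your $v_{2\ell}=\sum_{j=0}^{2\ell-1}\lambda^{(d-1)/2+\ell-j}A_{d-j}x_i$; and the last block row collapses to $P(\lambda)x_i=0$. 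Your cancellation argument for the degree bound is right: the coefficient of $\lambda^m$ in $v_{2\ell}$ is $\sum_{p=0}^{2\ell-1}A_{d-p}\,x_{i,\,m-\ell-(d-1)/2+p}$, and subtracting the coefficient of $\lambda^{m+(d-2\ell+1)/2}$ in $P(\lambda)x_i=0$ leaves $-\sum_{p=2\ell}^{d}A_{d-p}\,x_{i,\,m-\ell-(d-1)/2+p}$, all of whose $x_i$-indices exceed $\varepsilon_i$ once $m>\varepsilon_i+(d-1)/2-\ell$; this gives $\deg v_{2\ell}\le\varepsilon_i+(d-1)/2-\ell$, strictly below the degree of $v_1$. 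The Forney verification is also correct: the top block carries the leading coefficient, the bottom block takes over the pointwise full-rank condition at $\lambda_0=0$, and the strong-linearization dimension count closes the argument. The route you take, namely lifting a minimal basis explicitly and invoking Forney's criterion, is the standard technique for this type of minimal-index recovery result and is in the same spirit as the references \cite{DeDM10,DeDM12} that the paper cites alongside the theorem, so I would expect the proof in \cite{Dmyt15} to be essentially of this form.
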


The linearization ${\cal L}_{P(\lambda)}$ \eqref{linform} is crucial for obtaining the results in Section~\ref{sec.main}. Therefore we define the {\it generalized Sylvester space} consisting of the linearizations ${\cal L}_{P(\lambda)}$ of all the $m \times m$ skew-symmetric matrix polynomials of odd grade $d$:
\begin{equation}\label{gsyl}
\begin{aligned}
\GSYL^{ss}_{d,m\times m}= \{ {\cal L}_{P(\lambda)}  \ : P(\lambda) &\text{ are } m \times m \text{ skew-symmetric} \\
&\text{matrix polynomials of odd grade } d \}.
\end{aligned}
\end{equation}
If there is no risk of confusion we will write $\GSYL$ instead of $\GSYL^{ss}_{d, m\times m}$, specially in  explanations and proofs. The function $d({\cal L}_{P(\lambda)}=\lambda A - B, {\cal L}_{P'(\lambda)}=\lambda A' - B') := \left( ||A-A'||_F^2 + ||B-B'||_F^2 \right)^{\frac{1}{2}}$ mentioned in Sections \ref{pencils} and \ref{sec.main1} is a distance on $\GSYL$ and it makes $\GSYL$ a metric space.  Since $d({\cal L}_{P(\lambda)}, {\cal L}_{P'(\lambda)}) = d(P,P')$, there is a bijective isometry (and therefore a homeomorphism):
$$f: \POL^{ss}_{d, m\times m} \rightarrow \GSYL^{ss}_{d, m\times m} \quad \text{such that} \quad f: P \mapsto {\cal L}_{P}.$$
Next we define the orbit of the skew-symmetric linearizations of type \eqref{linform} of a fixed skew-symmetric matrix polynomial $P$
\begin{equation}\label{linorb}
\orb({\cal L}_{P}) = \{(S^{T}{\cal L}_{P(\lambda)} S) \in \GSYL^{ss}_{d, m\times m} \ : \ S \in GL_{n}(\mathbb C), \ n=md\}.
\end{equation}
We emphasize that all the elements of $\orb({\cal L}_{P})$ have the block structure of the elements of $\GSYL^{ss}_{d, m\times m}$.
Thus, in particular, $\orb(P) = f^{-1}(\orb({\cal L}_{P}))$, as a consequence of the properties of strong linearizations and Theorem \ref{skewlin}, and $\overline{\orb}(P) = f^{-1}(\overline{\orb}({\cal L}_{P}))$, as a consequence of $f$ being a homeomorphism. Moreover, we also have that for any $m \times m$ skew-symmetric matrix polynomials $P, Q$ of odd grade $d$, $\overline{\orb}(P) \supseteq \overline{\orb}(Q)$ if and only if $\overline{\orb}({\cal L}_{P}) \supseteq \overline{\orb}({\cal L}_{Q})$, where it is essential to note that the closures are taken in the metric spaces $\POL$ and $\GSYL$, respectively, defined above.  Note that similarly to the matrix pencil case, $\orb({\cal L}_{P})$ is open in its closure in the relative Euclidean topology, and so is $\orb(P)$ since $f$ is a homeomorphism.

We will use in Section \ref{sec.main} the fact that for any $m\times m$ skew-symmetric matrix polynomial $P(\lambda)$ a sufficiently small arbitrary skew-symmetric perturbation of the pencil ${\cal L}_{P(\lambda)}$ produces another pencil that although, in general, is not in $\GSYL^{ss}_{d, m\times m}$ is congruent to a pencil in $\GSYL^{ss}_{d, m\times m}$ that is very close to ${\cal L}_{P(\lambda)}$, see \cite{Dmyt15}. This type of results for the general matrix polynomials is presented in e.g., \cite{DLPVD15, JoKV13, VaDe83}.

\begin{theorem}[Theorems 8 and 9 in \cite{Dmyt15}]\label{deform}
Let $P(\lambda)$ be an $m \times m$ skew-symmetric  matrix polynomial with odd grade $d$ and let ${\cal L}_{P(\lambda)}$ be its skew-symmetric linearization~\eqref{linform}. If ${\mathcal E}$ is any arbitrarily small (entrywise) $md \times md$ skew-symmetric pencil, then there exist a nonsingular matrix $C$ and an arbitrarily small (entrywise) $m \times m$ skew-symmetric polynomial $F(\lambda)$ with grade $d$ such that
\begin{equation*}
\label{eq19}
C^T ({\cal L}_{P(\lambda)} +{\mathcal E}) C = {\cal L}_{P(\lambda) + F(\lambda)} \,.
\end{equation*}
\end{theorem}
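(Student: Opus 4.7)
The plan is to exploit the very rigid block structure of the linearization template \eqref{linform} and construct an explicit congruence $C = I + \Delta$, with $\Delta$ small, that absorbs the perturbation~$\mathcal E$ into the diagonal blocks $\lambda A_{d-i+1}+A_{d-i}$ (which encode the coefficients of $P$) while restoring all the \emph{structural} blocks ($\pm I$, $\pm\lambda I$ and the zero blocks) required for membership in $\GSYL^{ss}_{d,m\times m}$. The left-over change in the diagonal blocks will then be read off as the coefficients of a skew-symmetric correction $F(\lambda)$ of grade $d$.

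First, write $\mathcal E=\lambda E_1-E_0$ with $E_0,E_1\in\mathbb C^{md\times md}$ skew-symmetric, and partition each $E_k$ into $d\times d$ blocks of size $m\times m$, so that the $(i,j)$-block of $\mathcal L_{P(\lambda)}+\mathcal E$ is compared, position by position, with the corresponding block in \eqref{linform}. The task is to choose a block-partitioned small $\Delta$ so that $C^T(\mathcal L_{P(\lambda)}+\mathcal E)C$ has (i) $\pm I$ in the odd/even subdiagonal positions of the constant part, (ii) $\pm I$ in the even/odd subdiagonal positions of the $\lambda$-part, and (iii) zeros in all remaining off-diagonal blocks and in the even diagonal blocks of both parts. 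This yields a collection of block matrix equations of the form $\Delta_{ij}+(\text{linear combination of other }\Delta_{kl})=\text{(small entry from }\mathcal E\text{)}$, which can be solved recursively (sweeping from the corners of the block pencil inward) because the blocks that must be restored to $\pm I$ or $\pm\lambda I$ provide pivots for each equation. Since the right-hand sides are of size $O(\|\mathcal E\|)$, the solution $\Delta$ is of the same order, hence $C=I+\Delta$ is nonsingular and close to the identity.

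Once the structural blocks are restored, the only remaining perturbation lives in the odd diagonal positions, namely in the blocks that contain $\lambda A_{d-i+1}+A_{d-i}$. Collecting the resulting corrections $\lambda A'_{d-i+1}+A'_{d-i}-\lambda A_{d-i+1}-A_{d-i}$ defines a pencil of the required form, and since $C$ realizes a congruence of a skew-symmetric pencil, the whole result is automatically skew-symmetric; reading off the coefficient matrices $A'_k-A_k$ gives the coefficients of a skew-symmetric matrix polynomial $F(\lambda)$ of grade $d$, with $\|F\|_F=O(\|\mathcal E\|)$.

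The main obstacle is bookkeeping: one must verify that the block congruence equations produced by the odd/even alternation in \eqref{linform} are compatible and can indeed be solved sequentially without re-polluting the blocks already fixed. An alternative, slightly less explicit route is to apply the implicit function theorem to the smooth map $(C,F)\mapsto C^T\mathcal L_{P+F}C$ at $(I,0)$, reducing the problem to showing that its differential is surjective onto the space of skew-symmetric $md\times md$ pencils; surjectivity follows by the same block-by-block pivoting argument, but phrased at the level of tangent spaces. Either way, the rigidity of the template \eqref{linform} is what makes the construction work, and this rigidity is exactly what Theorem~\ref{skewlin} and \cite{Dmyt15} already exploit.
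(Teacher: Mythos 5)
This paper does not prove Theorem~\ref{deform}: it is stated as a literal import of Theorems~8 and 9 from \cite{Dmyt15}, so there is no in-text proof against which to compare your sketch. Taken on its own terms, your high-level strategy is the right one and matches the spirit of \cite{Dmyt15} and of the analogous block-Kronecker analyses (\cite{DLPVD15,DPVD16}): the template \eqref{linform} is rigid enough that any small skew-symmetric perturbation of ${\cal L}_{P}$ can be absorbed, via a congruence close to the identity, into the odd diagonal blocks carrying the coefficients of $P$, and the residue there defines the required skew-symmetric polynomial $F$ with $\|F\|_F=O(\|{\mathcal E}\|)$; the skew-symmetry of $F$ indeed comes for free from the skew-symmetry of $C^T({\cal L}_P+{\mathcal E})C$.

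The genuine gap sits in the middle of your argument. You assert that the block equations for $\Delta$ ``can be solved recursively (sweeping from the corners of the block pencil inward)'' because the $\pm I$ and $\pm\lambda I$ pivots are available, but for a \emph{congruence} this step is not automatic and you do not verify it. Because the single matrix $C$ acts on both sides, cleaning one block necessarily disturbs others: the $(i,j)$ block of $C^T M C$ is $\sum_{k,l} C_{ki}^T M_{kl} C_{lj}$, so choosing $\Delta_{ki}$ to fix one position simultaneously perturbs every block in the $i$-th row and column, with additional quadratic fill-in. There is, in general, no ordering of the blocks under which this elimination is strictly triangular, so the claim that the sweep terminates cleanly without re-polluting already-fixed blocks is precisely the content that needs proof, not a remark. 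The standard ways to close this gap are (a) to compose a sequence of small single-block congruences, explicitly tracking the $O(\|{\mathcal E}\|^2)$ fill-in generated at each sweep and showing it is small enough to be absorbed in later sweeps, or (b) to argue at the tangent-space level that the linear map $X\mapsto X^T{\cal L}_P+{\cal L}_P X$ surjects onto the complement of the odd-diagonal blocks in $\PEN^{ss}_{md\times md}$, and then invoke a local submersion/rank theorem. Your implicit-function-theorem alternative is exactly option (b), but you state the required surjectivity of the differential without establishing it; the IFT relocates the hard block computation, it does not eliminate it.
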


It is interesting to emphasize that the extension of Theorem \ref{deform} for symmetric, Hermitian, skew-Hermitian, palindromic, anti-palindromic, and alternating matrix polynomials of odd grade and for the linearization \eqref{linform} follows immediately from the proof of Theorem 8 in \cite{Dmyt15}. An extension of Theorem \ref{deform} to all the structures mentioned above and to a wider class of structure-preserving companion linearizations which include \eqref{linform} can be obtained as a corollary of a much more general result recently obtained in \cite[Thm.~6.12]{DPVD16}.


\section{Generic skew-symmetric matrix polynomials with bounded rank and fixed grade}
\label{sec.main}

In this section we present the complete eigenstructure of the generic $m\times m$ skew-symmetric matrix polynomial of a fixed rank $2r$ and odd grade $d$. As in the case for general matrix polynomials \cite[Lem. 3.1]{DmDo16} we present the following lemma revealing a key relation between $\overline{\orb}({\cal L}_{P})$, where the closure is taken in $\GSYL^{ss}_{d,m\times m}$, and $\overline{\orb^c}({\cal L}_{P})$, where the closure is taken in $\PEN^{ss}_{n \times n}$, with $n = m d$.

\begin{lemma}\label{cl}
Let $P$ be an $m\times m$ skew-symmetric matrix polynomial with odd grade $d$ and ${\cal L}_{P}$ be its linearization \eqref{linform} then $\overline{\orb}({\cal L}_{P})= \overline{\orb^c}({\cal L}_{P}) \cap \GSYL^{ss}_{d,m\times m}$.
\end{lemma}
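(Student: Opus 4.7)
The plan is to prove the two inclusions separately. The inclusion $\overline{\orb}({\cal L}_{P})\subseteq \overline{\orb^c}({\cal L}_{P}) \cap \GSYL^{ss}_{d,m\times m}$ is the easy one. Since $\GSYL^{ss}_{d,m\times m}$ is a (closed) linear subspace of $\PEN^{ss}_{n\times n}$, the topology on $\GSYL$ inherited from $\POL$ (via the isometry $f$) agrees with the subspace topology from $\PEN^{ss}_{n\times n}$. Every element of $\orb({\cal L}_P)$ is by definition both congruent to ${\cal L}_P$ and contained in $\GSYL$, so $\orb({\cal L}_P) \subseteq \orb^c({\cal L}_P)\cap\GSYL$. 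Taking closures in $\GSYL$ and using that $\GSYL$ is closed in $\PEN^{ss}_{n\times n}$ gives the inclusion.

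The nontrivial inclusion is $\overline{\orb^c}({\cal L}_{P}) \cap \GSYL^{ss}_{d,m\times m}\subseteq \overline{\orb}({\cal L}_{P})$, and here Theorem \ref{deform} is the crucial tool. Given $X\in\overline{\orb^c}({\cal L}_P)\cap\GSYL$, since $X\in\GSYL$ there is an $m\times m$ skew-symmetric polynomial $Q$ of odd grade $d$ with $X={\cal L}_Q$. Pick a sequence $S_k\in GL_n(\mathbb C)$ with $S_k^T{\cal L}_P S_k\to X$ in $\PEN^{ss}_{n\times n}$, and set $\mathcal{E}_k:=S_k^T{\cal L}_P S_k-{\cal L}_Q$, which is a skew-symmetric pencil tending to zero. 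Apply Theorem \ref{deform} to $Q$ and $\mathcal{E}_k$: for each $k$ (once $\mathcal{E}_k$ is small enough) there exist a nonsingular $C_k$ and a skew-symmetric polynomial $F_k$ of grade $d$, which can be made arbitrarily small together with $\mathcal{E}_k$, such that
\begin{equation*}
C_k^T({\cal L}_Q+\mathcal{E}_k)C_k=C_k^TS_k^T{\cal L}_P S_kC_k={\cal L}_{Q+F_k}.
\end{equation*}
Thus ${\cal L}_{Q+F_k}=(S_kC_k)^T{\cal L}_P(S_kC_k)$ is congruent to ${\cal L}_P$ and lies in $\GSYL$, so ${\cal L}_{Q+F_k}\in\orb({\cal L}_P)$ by the definition \eqref{linorb}. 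Since $F_k\to 0$, we have ${\cal L}_{Q+F_k}\to{\cal L}_Q=X$, proving $X\in\overline{\orb}({\cal L}_P)$.

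The main conceptual obstacle is that an arbitrary element of $\orb^c({\cal L}_P)$ need not lie in $\GSYL$: conjugating the linearization template by a general nonsingular matrix destroys the specific block structure of \eqref{linform}. Theorem \ref{deform} is precisely what lets us \emph{re-impose} that block structure via an additional small congruence transformation, at the cost of a small polynomial perturbation $F_k$ of $Q$. A subsidiary point to check is the quantitative content of Theorem \ref{deform}: the norm of $F_k$ can be controlled by that of $\mathcal{E}_k$, so that $\mathcal{E}_k\to 0$ forces $F_k\to 0$ and hence ${\cal L}_{Q+F_k}\to{\cal L}_Q$ in $\GSYL$. Once this is granted, the argument above closes the proof.
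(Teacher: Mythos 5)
Your proof is correct and follows essentially the same approach as the paper: the easy inclusion is topological, and the nontrivial inclusion $\overline{\orb^c}({\cal L}_P)\cap\GSYL\subseteq\overline{\orb}({\cal L}_P)$ is deduced from Theorem~\ref{deform}, which converts a small skew-symmetric congruence-perturbation of ${\cal L}_Q$ into the linearization of a small polynomial perturbation $Q+F$. Your writeup simply makes the approximating sequence and the quantitative dependence of $F_k$ on $\mathcal{E}_k$ explicit, but the mechanism is identical.
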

\begin{proof}
By definition $\orb({\cal L}_{P})= \orb^c({\cal L}_{P}) \cap \GSYL$ and thus $\overline{\orb}({\cal L}_{P})= \overline{\orb^c({\cal L}_{P}) \cap \GSYL}$  (the closure here is taken in the space $\GSYL$). For any ${\cal L}_{Q} \in \overline{\orb^c}({\cal L}_{P})\cap \GSYL$ there exists an arbitraly small  (entrywise) skew-symmetric pencil ${\mathcal E}$ such that $({\cal L}_{Q} +{\mathcal E}) \in \orb^c({\cal L}_{P})$. Therefore by Theorem \ref{deform},
there exists an arbitrarily small  (entrywise) skew-symmetric matrix polynomial $F$ with grade $d$ such that ${\cal L}_{Q+F} \in \orb^c({\cal L}_{P})$. Thus ${\cal L}_{Q}  \in \overline{\orb^c({\cal L}_{P}) \cap \GSYL}$, and  $\overline{\orb^c}({\cal L}_{P}) \cap \GSYL \subseteq\overline{\orb^c({\cal L}_{P}) \cap \GSYL}$.  Since $\overline{\orb^c({\cal L}_{P}) \cap \GSYL} \subseteq \overline{\orb^c}({\cal L}_{P}) \cap \GSYL$, we have that $\overline{\orb^c({\cal L}_{P}) \cap \GSYL}= \overline{\orb^c}({\cal L}_{P}) \cap \GSYL$, and the result is proved.
\end{proof}

Lemma \ref{cl} is the key to prove our main result about skew-symmetric matrix polynomials which uses and generalizes Theorem \ref{th:improvedDeDo}.

\begin{theorem} \label{mainth}
Let $m,r$ and $d$ be integers such that $m \geq 2$, $d \geq 1$ is odd, and $2 \leq 2r \leq (m-1)$.
The set of $m\times m$ complex skew-symmetric matrix polynomials of grade $d$ with rank at most $2r$ is a closed subset of $\POL_{d,m\times m}^{ss}$ equal to $\overline{\orb}(W)$, where $W$ is an $m \times m$ complex skew-symmetric matrix polynomial of degree exactly $d$ and rank exactly $2r$ with no elementary divisors at all, with $t$ left minimal indices equal to $(\beta +1)$ and with $(m-2r-t)$ left minimal indices equal to $\beta$, where $\beta = \lfloor rd / (m-2r) \rfloor$ and $t = rd \mod (m-2r)$, and with the right minimal indices equal to the left minimal indices.
\end{theorem}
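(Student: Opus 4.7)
The plan is to transfer the problem to the skew-symmetric pencil setting via the strong linearization $f: P \mapsto {\cal L}_P$ and apply Theorem~\ref{th:improvedDeDo} to the pencil ${\cal L}_P$ of size $n = md$. First I would identify the correct pencil parameters: a polynomial $P$ of grade $d$ has rank at most $2r$ if and only if its linearization ${\cal L}_P$ has rank at most $2w := m(d-1) + 2r$, since Theorem~\ref{skewlin} guarantees that $P$ and ${\cal L}_P$ have the same number $m - 2r'$ of minimal indices on each side (where $2r' = \rank P$), while ${\cal L}_P$ has size $md$. Since $d$ is odd, the half-shift $(d-1)/2$ is an integer, and a direct calculation gives $n - 2w = m-2r$, $\alpha := \lfloor w/(n-2w)\rfloor = \beta + (d-1)/2$, and $s := w \bmod (n-2w) = t$. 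Combining with Theorem~\ref{skewlin} and the fact that a strong linearization preserves finite and infinite elementary divisors, this shows that ${\cal L}_W$ has the same skew-symmetric KCF as the pencil ${\cal W}$ from \eqref{max} applied with the above $n$ and $w$, i.e., ${\cal L}_W \in \orb^c({\cal W})$.

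The main obstacle I anticipate is establishing the existence of $W$ with the prescribed complete eigenstructure as an element of $\POL^{ss}_{d,m\times m}$ of degree exactly $d$. Consistency is immediate from the index-sum identity for skew-symmetric polynomials of rank $2r$ without elementary divisors: the left minimal indices must sum to $rd$, which matches $(m-2r-t)\beta + t(\beta+1) = rd$. For explicit existence, I would give a block-diagonal construction using skew-symmetric grade-$d$ singular blocks of appropriate sizes (e.g., in the $3\times 3$ case with a single minimal index $d$, the skew-symmetric matrix with entries $\pm a, \pm b, \pm c$ above the diagonal does the job whenever $a, b, c$ are coprime polynomials of degree exactly $d$), or alternatively build $W$ from a minimal polynomial basis $X(\lambda)$ with the prescribed column degrees by producing a skew-symmetric $W$ annihilated by $X$ and having trivial invariant factors.

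With existence of $W$ in hand, the two set inclusions follow directly. For $\overline{\orb}(W) \subseteq \{P : \rank P \le 2r\}$: the latter set is closed in $\POL^{ss}_{d,m\times m}$ because it is defined by the simultaneous vanishing of the coefficients of all $(2r+1) \times (2r+1)$ minors of $P(\lambda)$ as polynomials in $\lambda$, which are polynomial equations in the coefficients of $P$; since $\orb(W)$ consists of rank-$2r$ polynomials, so does its closure. For the reverse inclusion, given any $P$ of rank at most $2r$, the pencil ${\cal L}_P \in \GSYL^{ss}_{d,m\times m}$ has rank at most $2w$, so Theorem~\ref{th:improvedDeDo} together with the identification in the first paragraph yield ${\cal L}_P \in \overline{\orb^c}({\cal L}_W)$ in $\PEN^{ss}_{md\times md}$; Lemma~\ref{cl} upgrades this to ${\cal L}_P \in \overline{\orb}({\cal L}_W)$ inside $\GSYL^{ss}_{d,m\times m}$; finally, applying the homeomorphism $f^{-1}$ gives $P \in \overline{\orb}(W)$, which completes the argument.
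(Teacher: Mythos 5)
Your overall strategy is the same as the paper's: linearize via ${\cal L}_P$, identify the induced rank bound $2w = m(d-1)+2r$ and size $n=md$, apply Theorem~\ref{th:improvedDeDo} at the pencil level, pull the orbit-closure inclusion back to $\GSYL^{ss}_{d,m\times m}$ with Lemma~\ref{cl}, and finally apply the homeomorphism $f^{-1}$. Your arithmetic $n-2w=m-2r$, $\alpha=\beta+(d-1)/2$, $s=t$ is correct, and your explicit argument that $\overline{\orb}(W)$ is contained in the set of rank-at-most-$2r$ polynomials (via vanishing of $(2r+1)\times(2r+1)$ minors) is a small but welcome addition that the paper leaves implicit.

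The genuine gap is in establishing the \emph{existence} of $W$ with the prescribed eigenstructure. You correctly check the necessary index-sum condition $\sum \varepsilon_i = rd$, but that alone does not produce $W$, and both construction sketches you offer fall short. The block-diagonal idea fails for $d>1$: if you take a direct sum of $(2k_i+1)\times(2k_i+1)$ skew-symmetric singular blocks of grade $d$, rank $2k_i$, and no elementary divisors, the index-sum theorem applied to each block forces its single minimal index to be $k_i d$, i.e., a multiple of $d$. But $\beta=\lfloor rd/(m-2r)\rfloor$ and $\beta+1$ are generically \emph{not} multiples of $d$ (and both are simultaneously needed when $0<t<m-2r$), so no block-diagonal sum of this type realizes the required eigenstructure. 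Your $3\times 3$ example with $a,b,c$ of degree $d$ only gives minimal index $d$ itself. The second alternative (construct $W$ from a minimal basis $X(\lambda)$ of prescribed column degrees and force trivial invariant factors) is precisely the nontrivial realizability question and is not developed. The paper closes exactly this gap by invoking the skew-symmetric realizability theorem \cite[Thm.~3.3]{Dmyt15}, which states that a skew-symmetric matrix polynomial of given size, odd grade $d$, rank $2r$, no elementary divisors, and left$=$right minimal indices exists if and only if those indices sum to $rd$; that is the missing ingredient you need to cite or reprove.
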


\hide{
\begin{theorem} \label{mainth}
Let $m,r$ and $d$ be integers such that $m \geq 2$, $d \geq 1$ and $2 \leq 2r \leq (m-1)$. Define the following complete eigenstructures ${\cal K}$ of skew-symmetric matrix polynomials without elementary divisors at all, with left minimal indices equal to the right minimal indices and equal to $\beta$ and $\beta +1$, whose values and numbers are as follows:
\begin{equation}
\label{kcilist}
{\cal W}: \{\underbrace{\beta+1, \dots , \beta+1}_{t},\underbrace{\beta, \dots , \beta}_{m-2r-t}, \underbrace{\beta+1, \dots , \beta+1}_{t}, \underbrace{\beta, \dots , \beta}_{m-2r-t}\},
\end{equation}
where $\beta = \lfloor rd / (m-2r) \rfloor$ and $t = rd \mod (m-2r)$. Then,
\begin{itemize}
\item[(i)] There exists an $m \times m$ complex skew-symmetric matrix polynomial $K$ of degree exactly $d$ and rank exactly $2r$ with the complete eigenstructure ${\cal K}$;

\item[(ii)]  $\overline{\orb}({\cal K}) \supseteq\overline{\orb}({\cal X})$ for every $m \times m$ skew-symmetric matrix polynomial $X$ of grade $d$ with rank at most $2r$;

\item[(iii)]  The set of $m\times m$ complex skew-symmetric matrix polynomials of grade $d$ with rank at most $2r$ is a closed subset of $\POL_{d,m\times m}$ equal to $\overline{O}(K)$.
\end{itemize}
\end{theorem}
}
\begin{proof}
Note that Theorem \ref{mainth} for $d=1$ coincides with Theorem \ref{th:improvedDeDo}.

Denote the complete eigenstructure from the statement of Theorem \ref{mainth} (consisting only of the left and right minimal indices) by
\begin{equation}
\label{kcilist}
{\bf W}: \bigg\{\overbrace{\underbrace{\beta+1, \dots , \beta+1}_{t},\underbrace{\beta, \dots , \beta}_{m-2r-t}}^{\text{left minimal indices}}, \overbrace{\underbrace{\beta+1, \dots , \beta+1}_{t}, \underbrace{\beta, \dots , \beta}_{m-2r-t}}^{\text{right minimal indices}} \bigg\}.
\end{equation}
First we show that there exists an $m\times m$ skew-symmetric matrix polynomial $W$ of degree exactly $d$ and rank exactly $2r$ that has the complete eigenstructure~${\bf W}$ \eqref{kcilist}. By \cite[Thm. 3.3]{Dmyt15} it is enough to show that the sum of the left (or right) minimal indices of ${\bf W}$ is equal to $rd$:
\begin{align*}
&\sum_1^t (\beta +1) + \sum_1^{m-2r-t} \beta
= \sum_1^{m-2r} \beta + t = (m-2r) \lfloor rd/(m-2r) \rfloor + t = rd.
\end{align*}

For every $m \times m$ matrix polynomial $P$ of grade $d$ and rank at most $2r$, the linearization ${\cal L}_{P}$ has rank at most $2r+m(d-1)$, because ${\cal L}_{P}$ is unimodularly equivalent to $P \oplus I_{m(d-1)}$.
The linearization ${\cal L}_{W}$ of the matrix polynomial $W$ is an $md \times md$ skew-symmetric matrix pencil with the rank $m(d-1)+2r$ and by Theorem \ref{skewlin} the KCF of ${\cal L}_{W}$ is the direct sum of the following blocks:
\begin{equation} \label{cka}
\{\underbrace{{\cal L}_{\beta+\eta+1}, \dots , {\cal L}_{\beta+\eta+1}}_{t},\underbrace{{\cal L}_{\beta+\eta}, \dots , {\cal L}_{\beta+\eta}}_{m-2r-t}, \underbrace{{\cal L}_{\beta+\eta+1}^T, \dots , {\cal L}_{\beta+\eta+1}^T}_{t}, \underbrace{{\cal L}_{\beta+\eta}^T, \dots , {\cal L}_{\beta+\eta}^T}_{m-2r-t}\},
\end{equation}
where $\eta=\frac{1}{2}(d-1)$.
We show that the KCF of ${\cal L}_{W}$ coincides with the KCF of the most generic skew-symmetric matrix pencil ${\cal W}$ of rank $2w=m(d-1)+2r$ and size $n \times n$, where $n = m d$, given in Theorem \ref{th:improvedDeDo}:
\begin{equation} \label{pg}
\{\underbrace{{\cal L}_{\alpha+1}, \dots , {\cal L}_{\alpha+1}}_{s},\underbrace{{\cal L}_{\alpha}, \dots , {\cal L}_{\alpha}}_{n-2w-s}, \underbrace{{\cal L}_{\alpha+1}^T, \dots , {\cal L}_{\alpha+1}^T}_{s}, \underbrace{{\cal L}_{\alpha}^T, \dots , {\cal L}_{\alpha}^T}_{n-2w-s}\}.
\end{equation}
Or equivalently, we show that the numbers and the sizes of the ${\cal L}$ and ${\cal L}^T$ blocks in \eqref{cka} and \eqref{pg} coincide, i.e., $\beta  + (d - 1)/2 = \alpha$, $s=t$, and $m-2r-t = n - 2w - s$.

For the sizes of the blocks we have
\begin{align} \label{alphasize1}
\beta  +  \frac{d - 1}{2}& =\left\lfloor \frac{rd}{m-2r} \right\rfloor + \frac{d - 1}{2} =  \left\lfloor  \frac{(m-2r)(d-1) + 2rd}{2(m-2r)} \right\rfloor  \\
& = \left\lfloor  \frac{m(d-1)+2r}{2(md - (m(d-1) + 2r))} \right\rfloor = \left\lfloor  \frac{2w}{2(n - 2w)} \right\rfloor = \alpha.
\label{alphasize2}
\end{align}

For the numbers of the blocks, we have
\begin{align*}
t &= rd \mod (m-2r) = ((m-2r)(d-1)/2 + rd) \mod (m-2r) \\
&= (m(d-1)+2r)/2 \mod (md - (m(d-1) + 2r))\\
&=  w \mod (n - 2w) =s
\end{align*}
and
$$m-2r-t = md - m(d-1) - 2r -t= n - 2w - s.$$

Thus ${\cal L}_{W}$ is congruent to the most generic skew-symmetric matrix pencil $\mathcal{W}$ of rank $2w$ obtained in Theorem \ref{th:improvedDeDo}, since they both are skew-symmetric and have the same KCF. Therefore $\orb^c({\cal L}_{W})=\orb^c(\mathcal{W})$.

By Theorem \ref{th:improvedDeDo} for each $m \times m$ skew-symmetric matrix polynomial $P$ of grade $d$ and rank at most $2r$ we have that $\overline{\orb^c}(\mathcal{W})\supseteq \overline{\orb^c}({\cal L}_{P})$, thus $\overline{\orb^c}({\cal L}_{W})\supseteq \overline{\orb^c}({\cal L}_{P})$. Therefore $\overline{\orb^c}({\cal L}_{W}) \cap \GSYL \supseteq \overline{\orb^c}({\cal L}_{P}) \cap \GSYL$, which is equivalent to $\overline{\orb^c({\cal L}_{W}) \cap \GSYL} \supseteq \overline{\orb^c({\cal L}_{P}) \cap \GSYL}$ by Lemma \ref{cl}, and, by definition, is also equivalent to $\overline{\orb}({\cal L}_{W}) \supseteq \overline{\orb}({\cal L}_{P})$, which according to the discussion after \eqref{linorb}, is equivalent to $\overline{\orb}(W) \supseteq \overline{\orb}(P)$. Therefore any $m\times m$ skew-symmetric matrix polynomial of grade $d$ with rank at most $2r$ is in the closed set $\overline{\orb}(W)$.
\end{proof}

For any $P \in \POL_{d, m\times m}^{ss}$, we define the codimension of $\orb (P) \subset \POL_{d, m\times m}^{ss}$ of the set of all the skew-symmetric matrix polynomials with the same eigenstructure as $P$ to be
$
\cod \orb (P) :=
\cod \orb({\cal L}_{P}).$
By \cite[Sect.~6]{Dmyt15}, $\orb({\cal L}_{P})$ is a manifold in the space of skew-symmetric matrix pencils $\PEN_{n \times n}^{ss}$, where $n = md$, and
$
\cod \orb({\cal L}_{P})=
\cod \orb^{c}({\cal L}_{P}),
$
where the codimension of $\orb({\cal L}_{P})$ is considered in the space $\GSYL_{d, m\times m}^{ss}$ and the codimension of $\orb^{c}({\cal L}_{P})$ in $\PEN_{n \times n}^{ss}$.
Therefore, for the generic $m\times m$ skew-symmetric polynomial $W$ with grade $d$ identified in Theorem \ref{mainth}, we deduce from the proof of this theorem that
$\cod \orb(W) = \cod \orb^{c}({\cal L}_{W}) = \cod \orb^{c} ({\cal W})$, where ${\cal W}$ is the pencil in Theorem \ref{th:improvedDeDo} with the identifications $n=md$ and $w = (m(d-1) + 2r)/2$. Therefore, we get from \eqref{codimcomp}
\begin{align*}
\cod \orb(W) & = (md - m(d-1) - 2r - 1) \left(md - m\frac{d-1}{2} - r\right) \\
             & = \frac{1}{2} \, (m - 2r - 1) (m(d+1) - 2r)  \, .
\end{align*}

\section{Conclusions and future work} \label{sect.conclusions}

This paper establishes that {\em there is only one} generic complete eigenstructure for  skew-symmetric matrix polynomials of rank at most $2r$ and odd grade $d$. In order to obtain this result, first, the corresponding result for skew-symmetric pencils is proved, i.e., when $d=1$, and, second, this result is extended to skew-symmetric matrix polynomials of arbitrary odd grades by means of a structure preserving strong-linearization template (also known as a structure preserving companion linearization \cite{DeDM14}) and a delicate translation of the topological properties of the space of skew-symmetric pencils into the space of skew-symmetric matrix polynomials. To the best of our knowledge, this is the first result of this kind obtained for  a class of structured matrix polynomials of fixed (bounded) rank and fixed grade and it is in stark contrast to the results for general (unstructured) matrix polynomials and pencils available in the literature \cite{DeDo08,DeDoLa17,DmDo16}, which establish that there are $rd+1$ generic complete eigenstructures for arbitrary matrix polynomials of rank at most $r$ and grade $d$, instead of only one. This striking reduction in the number of generic eigenstructures is related,  among the other reasons, to the equality of the left and the right minimal indices of skew-symmetric matrix polynomials.

The results in this paper call to natural extensions to other classes of structured matrix polynomials (and pencils), but there are a number of obstacles that make such extensions challenging and nontrivial. The most obvious extension one could have in mind is to skew-symmetric matrix polynomials of rank at most $2r$ and {\em even grade} $d$, but in this case a structured companion linearization is not available in the literature and, in fact, it has been proved that it does not exist for $m\times m$ skew-symmetric matrix polynomials when $m$ is odd \cite[Thm. 7.21]{DeDM14}. Therefore, the techniques used in this paper cannot be used for skew-symmetric matrix polynomials of even grade. For several other structured classes of matrix polynomials of odd grade there are indeed structure preserving companion linearizations \cite{DeDoMac11palin,MMMM10,MMMM11palin} (see also \cite[Sect. 7]{DeDM14} and \cite{MaMT15}, and the references therein), but there are no stratification results available in the literature for these structures. Since in this paper the stratification results for skew-symmetric pencils and polynomials previously developed in \cite{Dmyt15,DmKa14} have played a key role, we see again that the techniques of this paper cannot be directly used to get similar results for other structured matrix polynomials.

\section*{Acknowledgements}

The work of Andrii Dmytryshyn was supported by the Swedish Research Council (VR) under grant E0485301, and by eSSENCE (essenceofescience.se), a strategic collaborative e-Science programme funded by the Swedish Research Council.

The work of Froil\'an M. Dopico was supported by ``Ministerio de Econom\'{i}a, Industria y Competitividad of Spain'' and ``Fondo Europeo de Desarrollo Regional (FEDER) of EU'' through grants MTM-2015-68805-REDT and MTM-2015-65798-P (MINECO/FEDER, UE).

{\small
\bibliographystyle{abbrv}

}
\end{document}